\documentclass{amsart}
\usepackage{euscript,amsmath, amssymb, amsfonts, mathtools}
\usepackage{mathrsfs}
\usepackage{mathabx}
\usepackage{stmaryrd}
\usepackage{crossreftools}

\makeatletter
\@namedef{subjclassname@1991}{Subject}
\makeatother

\usepackage[colorlinks=true, allcolors=blue]{hyperref}

\usepackage{tikz-cd}

\numberwithin{equation}{section}
\usepackage{enumitem}   

\newtheorem{theorem}{Theorem}[section]
\newtheorem{lemma}[theorem]{Lemma}
\newtheorem{proposition}[theorem]{Proposition}
\newtheorem{corollary}[theorem]{Corollary}

\theoremstyle{definition}
\newtheorem{definition}[theorem]{Definition}

\theoremstyle{remark}
\newtheorem*{remark}{Remark}

\usepackage{graphicx}  
\usepackage{icomma}
\usepackage{bbm}
\usepackage{tikz}
\usetikzlibrary{shapes}
\usepackage{color}
\usepackage{algorithmic}
\usepackage{verbatim}
\usepackage{xurl}
\usepackage{indentfirst}
\usepackage{lipsum}
\usepackage[backend = biber]{biblatex}
\usepackage{placeins}
\usepackage{bigints}
\usepackage{subcaption}
\usepackage{appendix}

\DeclareMathOperator{\Jac}{Jac}

\DeclareMathOperator{\CC}{\mathbb C}

\DeclareMathOperator{\ZZ}{\mathbb Z}

\DeclareMathOperator{\PP}{\mathbb P}

\DeclareMathOperator{\Per}{Per}

\usepackage{csquotes}

\begin{document}
	\title[Kleinian hyperelliptic funtions of weight 2]{Kleinian hyperelliptic funtions of weight 2 associated with curves of genus 2}
	
	\author{Matvey Smirnov}
	\address{119991 Russia, Moscow GSP-1, ul. Gubkina 8,
		Institute for Numerical Mathematics,
		Russian Academy of Sciences}
	\email{matsmir98@gmail.com}

	\begin{abstract}
		We introduce a new collection of special functions associated to a complex curve of genus 2 similar to Kleinian hyperelliptic $\sigma$-function. These functions are related to weight 2 $\theta$-functions in the same fashion as $\sigma$-function is related to the classical $\theta$-function. A key feature of the introduced functions is the fact that they are well-defined for genus 2 algebraic curves without any restrictions (in particular it is not needed to assume that the curve has a Weierstrass point at infinity).
		
		\smallskip
		\noindent \textbf{Keywords.} Kleinian hyperelliptic functions, curves of genus 2.
	\end{abstract}
	\subjclass{32A08, 32A10}
	\maketitle

    \section{Introduction}
	The study of higher genus Kleinian functions have recieved great attention in the span of last three decades. One of the main reasons for this attention is the convenience of the $\wp$-function approach to integrable systems (in particular, the KP equation). We refer to~\cite{BuchOldAndNew} and references therein for the comprehensive survey of recent developments in the theory of Kleinian functions. However, only quite a bit of research papers aim at numerical evaluation of Kleinian functions for genus $>1$. Noteworthy examples include~\cite{Bernatska1} and~\cite{Bernatska2}, where the approach is based on expressing $\wp$-functions through theta functions, and~\cite{Matsutani}, where $\wp$-functions are numerically evaluated via integration of the KdV equation. This contrasts drastically with the elliptic case, for which the effective numerical procedures are well known and understood for all types of special functions (see, e.g.~\cite{Smirnov},~\cite{Cremona},~\cite{labrandeg1},~\cite{Johansson},~and~\cite{luther}). 
	
	The most effective approach to calculations with elliptic functions is the Landen's (or, equivalently, AGM) method. Let us recall the main idea of this approach. The Landen's method involves construction of a chain of isogenies $C_0 \leftarrow C_1 \dots \leftarrow C_n$ of elliptic curves (obtained from the starting curve $C_0$ by ``doubling'' one of the periods iteratively). If the isogenous curves are chosen appropriately, then for sufficiently large $n$ the curve $C_n$ is ``close enough'' to degeneration and the elliptic functions, associated with $C_n$ can be evaluated by explicit formulas through elementary functions. Then the values of elliptic functions associated with $C_{k-1}$ are approximated using already computed values for $C_k$. This procedure for Weierstrass elliptic functions (i.e. the Kleinian functions of genus $1$) was described in~\cite{Smirnov}.
	
	We aim to give an algorithm to compute Kleinian functions of genus $2$ based on the ideas analogous to the Landen's method. Clearly, such an algorithm requires three main ingredients:
	\begin{enumerate}[label=(\alph*)]
		\item\label{ingra} A procedure that given a curve $C$ constructs another curve $\hat C$ with an isogeny $\Jac(C) \leftarrow \Jac(\hat C)$. It is required that iterations of this procedure yield a controllable degeneration of the curves.
		\item\label{ingrb} A formula that expresses Kleinian functions associated with $C$ through Kleinian functions associated with $\hat C$.
		\item\label{ingrc} An approximation for the Kleinian functions associated with curves ``near degeneration'' (i.e. the curves obtained by iterating the construction in~\ref{ingra}).
	\end{enumerate}
	Given these ingredients it is easy to formulate an algorithm that computes Kleinian functions of genus 2 similar to the described above Landen's method.
	
	For the part~\ref{ingra} there is a classical approach to constructing genus $2$ curves with isogenous Jacobian varieties due to Richelot (the Richelot isogeny in its modern presentation was proposed by Humbert~\cite{Humbert}). This construction was already used in calculation of periods of genus $2$ curves (see, e.g.~\cite{BostMestre} and~\cite{Chow}) and in isogeny-based cryptography (see, e.g.~\cite{CasselsFlynn},~\cite{flynnPaper}, ~\cite{takashima}, and references therein).   However, there is a problem with using Richelot isogenous curves in the ingredient~\ref{ingra}. That is, the definition of the Kleinian functions requires an additional restriction on the equation of the curve (namely, that there is only one point at infinity) that is not preserved by the construction of the isogenous curve. 
	
	In this paper we introduce a quadruple of, apparently, new special functions associated with a genus $2$ curve. The introduced functions are designed similarly to the $\sigma$-function, but squared. That is, they constitute a basis of the space of global sections of a linear bundle on Jacobian variety of a curve, which is the tensor square of the bundle of the $\sigma$-function. Similarly to the $\theta$-functional terminology adopted by Mumford in~\cite{mumfordI} we call these functions {\it{Kleinian functions of weight 2}}. We prove below that the classical genus 2 Kleinian functions (i.e. $\sigma, \zeta_j$, and $\wp_{jk}$) can be expressed through these functions and their derivatives. Thus, we are content with obtaining a computational procedure for Kleinian functions of weight $2$ instead of the classical Kleinian functions.
	
	An important feature of Kleinian functions of weight $2$ is that their definition does not require any conditions on the algebraic equation of the curve (in contrast to $\sigma$-function, which is usually defined only for hyperelliptic curves with Weierstrass point at infinity; there is an approach~\cite{ayanoNonWeier} to define $\sigma$-function for such curves, but it requires an additional choice of the branch point). Moreover, since the role of Kleinian functions of weight $2$ is similar to that of theta-functions of weight $2$, they define an embedding of the Kummer surface of a genus $2$ curve into $\CC\PP(3)$. This feature will allow us to find an equation that connects Kleinian functions of weight $2$ for Richelot isogenous curves, that is, to obtain the ingerdient~\ref{ingrb} of the algorithm. Finally, we note that for the introduced Kleinian functions of weight $2$ the problem~\ref{ingrc} also can be solved explicitly. These results (and the algorithm) will be the subject of forthcoming papers.
	
	The paper is organised as follows. In Section~\ref{secSpecialMerFunctions} we introduce some notation and derive several properties of $\wp$-functions. In Section~\ref{secKleinianWeight2} we introduce Kleinian functions of weight 2. We select four specific functions from the space of Kleinian functions of weight 2, and we derive their relations with classical Kleinian hyperelliptic functions. Finally, in Section~\ref{secTaylorExpansions} we find order 2 Taylor expansions of the introduced functions.
	\section{Meromorphic functions associated with curves of genus 2}\label{secSpecialMerFunctions}
	In this section we fix some notation and recall the definition and properties of $\wp$-functions associated with curves of genus $2$. Since in literature the curve is usually required to have only one point at infinity (and we are interested in the general case), we provide the proofs of all necessary results for completeness.
	\subsection{Notation} For brevity we shall denote the space of complex polynomials of degree $\le k$ by $\mathfrak P_k$. Given a polynomial $f$ we denote by $f_j$ the coefficient of $x^j$ in $f$. Throughout this section $f \in \mathfrak P_6$ denotes a complex polynomial of degree equal to $5$ or $6$ with simple roots. We shall call such polynomials {\it{admissible}}. We say that $f$ is in {\it{Weierstrass form}} if $f_6 = 0$ and $f_5 = 4$ (in literature often an additional condition is imposed, namely, $f_4 = 0$; we shall not require this condition). By $\mathcal X_f$ we denote the corresponding to the polynomial $f$ curve of genus $2$, i.e. the curve, whose affine part is given by the equation $y^2 = f(x)$ in the space of two variables $x,y$. Except the affine part this curve contains one point at infinity if $\deg f = 5$ and two points at infinity if $\deg f = 6$; these points are precisely the poles of the function $(x,y) \mapsto x$ (which we shall denote by simply $x$). Finally, let $\mathscr J_f:\mathcal X_f \to \mathcal X_f$ denote the {\it{hyperelliptic involution}} on $\mathcal X_f$, i.e. $\mathscr J_f(x,y) = (x,-y)$ on the affine part of $\mathcal X_f$. 
	
	By $\omega_1^f$ and $\omega_2^f$ we denote the standard holomorphic $1$-forms on $\mathcal X_f$ given by the formulas $\omega_1^f = dx/y$ and $\omega_2^f = xdx/y$. Let $\Per_f$ be the {\it{period lattice}} of the forms $\omega_1^f$, $\omega_2^f$ on $\mathcal X_f$, i.e. 
	\[\Per_f = \{w \in \mathbb C^2: \begin{pmatrix} w_1 \\ w_2 \end{pmatrix} = \int_\gamma \begin{pmatrix} \omega_1^f \\ \omega_2^f \end{pmatrix} \text{ for some } \gamma \in H_1(\mathcal X_f, \ZZ) \}. \]
	By $\Jac_f$ we denote the {\it{Jacobian variety}} of $\mathcal X_f$, i.e. $\Jac_f = \CC^2 / \Per_f$. Moreover, we let $\mathfrak L_f$ be the {\it{canonical hyperelliptic divisor class}} on $\mathcal X_f$ (i.e. the class of a divisor $P + \mathscr J_f(P)$ for any point $P \in \mathcal X_f$) and let $\mathscr A_f(D)$ denote the image of the divisor class $D - \mathfrak L_f$ in $\Jac_f$ under the Abel map for any effective divisor $D$ of degree $2$ on $\mathcal X_f$. We shall identify the effective degree $2$ divisors on $\mathcal X_f$ with the elements of the surface $\mathcal X_f^{(2)}$ (the symmetric Cartesian square of the curve $\mathcal X_f$). With this notation $\mathscr A_f$ is a bimeromorphic mapping of $\mathcal X_f^{(2)}$ onto $\Jac_f$. More precisely, $\mathscr A_f$ maps all divisors that belong to the class $\mathfrak L_f$ to zero, and bijectively maps $\mathcal X_f^{(2)} \setminus \mathfrak L_f$ to $\Jac_ f\setminus \{0\}$. Finally, we state an important property of the map $\mathscr A_f$, namely
	\begin{equation}\label{eqHyperellipticInvolutionAbel}
		\mathscr A_f[(\mathscr J_f(P)) + (\mathscr J_f(Q))] = - \mathscr A_f[(P) + (Q)]
	\end{equation}
	for all $(P) + (Q) \in \mathcal X_f^{(2)}$.
	
	\subsection{Meromorfic functions on \texorpdfstring{$\mathcal X_f^{(2)}$ and $\Jac_f$}.} Now we introduce the meromorphic functions $\xi^f_{11}$, $\xi_{12}^f$, and $\xi_{22}^f$ on the surface $\mathcal X_f^{(2)}$ via the formulas
	\begin{equation*}
		\xi^f_{22}(D) = x_1 + x_2,\;\;\xi^f_{12}(D) = -x_1x_2,\;\;\xi^f_{11}(D) = \frac{F_f(x_1,x_2) - 2y_1y_2}{4(x_1 - x_2)^2},
	\end{equation*}
	where $D = (x_1,y_1) + (x_2,y_2) \in \mathcal X_f^{(2)}$ and 
	\begin{equation*}
		F_f(a,b) = \\ 2f_0 + f_1(a + b) + 2f_2ab + f_3ab(a + b) + 2f_4a^2b^2 + f_5a^2b^2(a + b) + 2f_6a^3b^3.
	\end{equation*}
	Since $\mathscr A_f:\mathcal X_f^{(2)} \to \mathscr J_f$ is bimeromorphic there exist unique meromorphic functions $\wp^f_{11}$, $\wp^f_{12}$, and $\wp^f_{22}$ on $\Jac_f$ that satisfy the equations $\wp^f_{jk}(\mathscr A_f(D)) = \xi^f_{jk}(D)$ for a generic $D \in \mathcal X_f^{(2)}$. By an abuse of notation we shall use symbols $\wp^f_{jk}$ to denote both the functions on $\Jac_f$ defined above and the corresponding $\Per_f$-periodic functions on $\CC^2$. The introduced functions $\xi^f_{jk}$ and $\wp_{jk}^f$ are very well-known in the theory of algebraic curves of genus $2$ (see, e.g.~\cite{CasselsFlynn} and~\cite{baker}). 
	We outline the important properties of the functions $\xi^f_{jk}$ and $\wp^f_{jk}$ in the following proposition.
	\begin{proposition}\label{propPropertiesOfP}
		The following statements hold.
		\begin{enumerate}[label=(\roman*)]
			\item\label{ppi} The functions $\wp_{jk}^f$ are even. 
			\item\label{ppii} The meromorphic functions $1$, $\wp_{11}^f$, $\wp_{12}^f$, and $\wp_{22}^f$ are linearly independent.
			\item\label{ppiii} Let $(\infty_1) + (\infty_2)$ be the divisor of poles of the function $x$ on $\mathcal X_f$ (that is, $\infty_2 = \mathscr J_f(\infty_1)$ and, if $\deg f = 5$, then $\infty_1 = \infty_2$). Consider the irreducible subvarieties $A^f_j = \{D = (P) + (\infty)_j: P \in \mathcal X_f\}$ in $\mathcal X_f^{(2)}$ for $j = 1,2$. Then the divisors
			\[
			(\xi^f_{12}) + (A^f_1) + (A^f_2),\;\;(\xi_{22}^f) + (A^f_1) + (A^f_2)
			\]
			on $\mathcal X_f^{(2)}$ are effective.
			\item\label{ppiv} Consider the irreducible subvariety $L_f = \mathfrak L_f \cap \mathcal X_f^{(2)}$ in $\mathcal X_f^{(2)}$ that consists of special effective divisors of degree $2$ on $\mathcal X_f$. Then there exists $k \in \mathbb N$ such that the divisor
			\[(\xi_{11}^f) + (A^f_1) + (A^f_2) + k(L_f)\]
			on $\mathcal X_f^{(2)}$ is effective.
			\item\label{ppv} Let $B^f_j$ be the image of $A^f_j$ in $\Jac_f$ with respect to the map $\mathscr A_f$, $j = 1,2$. Then the divisors
			\[
			(\wp_{jk}^f) + (B^f_1) + (B^f_2)
			\]
			are effective for $(j,k) = (1,1),(1,2),(2,2)$.
		\end{enumerate}
	\end{proposition}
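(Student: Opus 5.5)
We treat each item directly from the explicit formulas for $\xi^f_{jk}$, transported to $\Jac_f$ through the bimeromorphic map $\mathscr A_f$.

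\textbf{Items (i) and (ii).} For (i) we use \eqref{eqHyperellipticInvolutionAbel}: replacing $D=(x_1,y_1)+(x_2,y_2)$ by $(x_1,-y_1)+(x_2,-y_2)$ sends $\mathscr A_f(D)$ to $-\mathscr A_f(D)$. The formulas for $\xi^f_{jk}$ are visibly invariant under $(y_1,y_2)\mapsto(-y_1,-y_2)$, since $y_1y_2$ is unchanged. Hence $\wp^f_{jk}(-u)=\wp^f_{jk}(u)$ for generic $u$, and therefore for all $u$.

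For (ii) it suffices to show that $1,\xi_{11}^f,\xi_{12}^f,\xi_{22}^f$ are linearly independent on $\mathcal X_f^{(2)}$. Suppose $a+b\xi_{11}+c\xi_{12}+d\xi_{22}=0$. Fix $x_1,x_2$ and flip the sign of $y_2$ alone. Only the term $-2y_1y_2/(4(x_1-x_2)^2)$ changes, and $y_1y_2\not\equiv0$, so $b=0$. The remaining relation $a-cx_1x_2+d(x_1+x_2)=0$ in the independent variables $x_1,x_2$ forces $a=c=d=0$.

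\textbf{Items (iii) and (iv).} These are pole computations on $\mathcal X_f^{(2)}$. Since $x$ has polar divisor $(\infty_1)+(\infty_2)$ on $\mathcal X_f$, the symmetric functions $x_1+x_2$ and $x_1x_2$ have poles only along $A_1^f\cup A_2^f$. Each has order at most one there, because a generic point of $A_j^f$ has exactly one coordinate at $\infty_j$, where $x$ has a simple pole (for $\deg f=6$) or a pole of order 2 in the local parameter at the single point at infinity (for $\deg f=5$). To handle the $\deg f=5$ case, where $A_1^f=A_2^f$, we count $(A_1^f)+(A_2^f)=2(A_1^f)$; the double pole of $x$ at $\infty$ is then absorbed. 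This gives (iii).

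For (iv), the poles of $\xi_{11}$ lie on three loci. First, where some $x_i\to\infty$: check growth by degree counting. $F_f$ has degree at most $3$ in each variable, and $y_i\sim\sqrt{f_6}\,x_i^3$ for $\deg f=6$ (respectively $\sim 2x_i^{5/2}$ for $\deg f=5$). Dividing by $(x_1-x_2)^2\sim x_1^2$ yields at most a simple pole (respectively a pole of order $\le 2$ in the local parameter) along $A_j^f$, which is exactly what the divisor $(A_1^f)+(A_2^f)$ allows. Second, where $x_1=x_2$ with $y_1=-y_2$, which is the locus $L_f$; there any finite pole order $k$ suffices, for instance $k=2$. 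Third, where $x_1=x_2$ with $y_1=y_2$, the diagonal off $L_f$. Here we must show that $\xi_{11}$ is regular. The numerator $F_f(x_1,x_2)-2y_1y_2$ vanishes to second order on the diagonal, because $F_f(a,a)=2f(a)$ and $\partial_aF_f(a,b)|_{b=a}=f'(a)$. So the quotient is regular in the symmetric coordinates; this includes the Weierstrass points, which must be checked in local parameters $y_i$. We expect this diagonal analysis to be the main technical obstacle.

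\textbf{Item (v).} We push (iii) and (iv) forward by $\mathscr A_f$. This map is an isomorphism from $\mathcal X_f^{(2)}\setminus L_f$ onto $\Jac_f\setminus\{0\}$ and contracts $L_f\cong\PP^1$ to the point $0$. Hence $(\wp^f_{jk})+(B_1^f)+(B_2^f)$ is effective on $\Jac_f\setminus\{0\}$. A divisor on a smooth surface is determined off a point, and the polar divisor of a meromorphic function has no isolated points, being of pure codimension one. So effectivity extends across $0$, and the contribution $k(L_f)$ disappears. Finally, the curves $B_j^f$ pass through $0$, which is consistent with this conclusion.
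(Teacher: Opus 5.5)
Your proof is correct. For (i), (iii), (iv) and (v) it follows the paper: evenness via \eqref{eqHyperellipticInvolutionAbel}; poles located on $A_1^f$, $A_2^f$, $L_f$ and the diagonal; regularity along the diagonal because the numerator $F_f(x_1,x_2)-2y_1y_2$ vanishes there; and (v) because $\mathscr A_f$ only contracts $L_f$ to a point.

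On the diagonal the paper works on $\mathcal X_f^{(2)}$ itself. It uses $\xi_{22}^f,\xi_{12}^f$ as local coordinates and $(\xi_{22}^f)^2+4\xi_{12}^f$ as a reduced defining function, so first-order vanishing of the numerator suffices. Your second-order vanishing on $\mathcal X_f\times\mathcal X_f$ is the same fact, since a symmetric function vanishing on the diagonal is automatically divisible by $(x_1-x_2)^2$. You also supply the degree count at infinity and the value $k=2$, which the paper calls evident.

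The real difference is in (ii). The paper first checks that $1,\xi_{22}^f,\xi_{12}^f$ are independent. It then excludes $\xi_{11}^f$ from their span by a limit along $D(t)=(x_0,-u(x_0))+(x_0+t,u(x_0+t))\to L_f$, where $\xi_{22}^f,\xi_{12}^f$ stay bounded but $\xi_{11}^f$ blows up. Your sign flip $y_2\mapsto -y_2$ isolates the coefficient of $\xi_{11}^f$ purely algebraically and is shorter. The paper's version has two extra benefits: it shows that $\xi_{11}^f$ genuinely has a pole along $L_f$, so the term $k(L_f)$ in (iv) cannot be dropped, and the same curve $D(t)$ is reused in Lemma~\ref{lemValuesAtZero} and in the proof of Theorem~\ref{thTaylorExpansions}.

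There are two minor slips.
\begin{enumerate}
\item The points $(W)+(W)$, with $W$ a Weierstrass point, lie on $L_f$ as well as on the diagonal, so $\xi_{11}^f$ is not regular there. Nothing needs to be checked at these finitely many points, because divisor coefficients are read off at generic points of each component.
\item For $\deg f=5$ one has $y\sim\sqrt{f_5}\,x^{5/2}$, which equals $2x^{5/2}$ only in Weierstrass form. Your pole-order count is unaffected.
\end{enumerate}
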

	\begin{proof}
		The statement~\ref{ppi} easily follows from~\eqref{eqHyperellipticInvolutionAbel} and the equalities
		\[
		\xi_{jk}^{(f)}\left[(P) + (Q)\right] = \xi_{jk}^{(f)}\left[(\mathscr J_f(P)) + (\mathscr J_f(Q))\right],
		\]
		which are evident from the definition of functions $\xi_{jk}^f$. To prove~\ref{ppii} note that $1, \xi^f_{22},$ and $\xi^f_{21}$ are linearly independent meromorphic functions on $\mathcal X_f^{(2)}$ (for instance, consider them on the diagonal). To prove that $\xi^f_{11}$ does not belong to the linear span of functions $1, \xi^f_{22}$, and $\xi_{12}^f$ consider arbitrary $x_0 \in \CC$ such that $f(x_0) \ne 0$. Let $u$ denote any branch of $\sqrt{f}$ defined in a neighbourhood of $x$. Finally, for small $t \ne 0$ consider divisor $D(t) = (x_0, -u(x_0)) + (x_0 + t, u(x_0 + t)) \in \mathcal X_f^{(2)}$, which obviously does not belong to the polar locus of functions $\xi_{jk}^f$ if $t \ne 0$. It is easy to see that $\xi_{22}^f(D(t))$ and $\xi_{12}^f(D(t))$ converge to $2x_0$ and $-x_0^2$ respectively when $t \to 0$. On the other hand $|\xi_{11}^f(t)| \to +\infty$ when $t \to 0$. Thus, $\xi_{11}^f$ cannot be a linear combination of $1, \xi_{22}^f$, and $\xi_{21}^f$. Thus, $1, \xi_{22}^f, \xi_{12}^f$, and $\xi_{11}^f$ are linearly independent and, since $\mathscr A_f$ is bimeromorphic, the statement~\ref{ppii} follows.
		
		The statements~\ref{ppiii} and~\ref{ppiv} are evident from the definition of functions $\xi_{jk}^f$, if we prove that $\xi_{11}^f$ does not have a pole along the analytic set $\mathrm{Diag} = \{(P) + (P): P \in \mathcal X_f\}$. To prove this observe that at a generic point $D \in \mathrm{Diag}$ the functions $\xi_{22}^f$ and $\xi_{12}^f$ constitute local coordinates on $\mathcal X_f^{(2)}$. Thus, $(x_1 - x_2)^2 = \left(\xi_{22}^f\right)^2 + 4\xi_{12}^f$ at a generic $D \in \mathrm{Diag}$ is the function defining the analytic set $\mathrm{Diag}$. Thus, to prove that $\xi_{11}^f$ does not have a pole along $\mathrm{Diag}$ it suffices to note that the numerator $F_f(x_1,x_2) - 2y_1y_2$ of the expression that defines $\xi_{11}^f$ has a zero along $\mathrm{Diag}$. The statement~\ref{ppv} now follows immediately from~\ref{ppiii} and~\ref{ppiv} since the analytic set $L$ is blown down by the map $\mathscr A_f$.
		
	\end{proof}
	
	We conclude this subsection with the quartic relation that is satisfied by $\wp_{jk}^f$.
	\begin{proposition}\label{propQuarticRelation}
		Let $f$ be an admissible polynomial. For convenience we denote by $\wp_{0}^f$ the function on $\Jac_f$ that is identically equal to $1$. Then there exist homogeneous polynomials of three variables $K_3$ and $K_4$ of degrees $3$ and $4$ respectively such that the relation
		\begin{equation}\label{eqQuartic}
			\left(\left(\wp_{22}^f\right)^2 + 4\wp_{0}^f\wp_{12}^f\right)\left(\wp_{11}^f\right)^2 + K_3\left(\wp_{0}^f, \wp_{12}^f, \wp_{22}^f\right)\wp_{11}^f + K_4\left(\wp_{0}^f, \wp_{12}^f, \wp_{22}^f\right) = 0
		\end{equation}
		holds.
	\end{proposition}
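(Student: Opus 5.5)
The relation is an identity among the functions $\xi^f_{jk}$ on $\mathcal X_f^{(2)}$. Since $\mathscr A_f$ is bimeromorphic and $\wp^f_{jk}\circ\mathscr A_f=\xi^f_{jk}$ for generic $D$, it suffices to prove
\[
\left((\xi^f_{22})^2+4\xi^f_{12}\right)(\xi^f_{11})^2+K_3(1,\xi^f_{12},\xi^f_{22})\,\xi^f_{11}+K_4(1,\xi^f_{12},\xi^f_{22})=0
\]
on a dense open subset of $\mathcal X_f^{(2)}$. Homogenizing with $\wp_0^f\equiv 1$ then gives \eqref{eqQuartic}. We therefore work with $D=(x_1,y_1)+(x_2,y_2)$ where $x_1\ne x_2$ and both points lie in the affine part.

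The key step is to eliminate $y_1y_2$. Write $s=x_1+x_2=\xi^f_{22}$ and $p=x_1x_2=-\xi^f_{12}$, so that $(x_1-x_2)^2=s^2-4p=(\xi^f_{22})^2+4\xi^f_{12}$. The definition of $\xi^f_{11}$ gives
\[
4(x_1-x_2)^2\,\xi^f_{11}-F_f(x_1,x_2)=-2y_1y_2 .
\]
Squaring and using $y_j^2=f(x_j)$ yields
\[
\left(4(x_1-x_2)^2\xi^f_{11}-F_f(x_1,x_2)\right)^2=4f(x_1)f(x_2).
\]
Expanding, this reads
\[
16(x_1-x_2)^4(\xi^f_{11})^2-8(x_1-x_2)^2F_f\,\xi^f_{11}+F_f^2-4f(x_1)f(x_2)=0 .
\]
This is a quadratic relation for $\xi^f_{11}$ whose coefficients are symmetric polynomials in $x_1,x_2$, and hence polynomials in $s,p$.

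Next we remove the extra factor $(x_1-x_2)^2$. For this we need the classical fact that $F_f(a,b)^2-4f(a)f(b)$ is divisible by $(a-b)^2$. This holds because $F_f(a,a)=2f(a)$, and because the first-order term also vanishes. One checks this by writing $F_f(a,b)^2-4f(a)f(b)$ as a sum over pairs of coefficients: each pairing of monomials $f_if_j$ contributes a multiple of $(a^ib^j-a^jb^i)^2$ type expressions, essentially the computation behind the Cassels–Flynn Kummer quartic. Alternatively, one observes that the polynomial is symmetric and vanishes to second order on $a=b$; the derivative in $b$ at $b=a$ equals $2F_f\,\partial_bF_f-4f f'=4f\left(\partial_bF_f(a,a)-f'(a)\right)$, and $\partial_bF_f(a,a)=f'(a)$ can be verified termwise. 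Writing $F_f^2-4f(x_1)f(x_2)=(x_1-x_2)^2G(s,p)$ and dividing the relation by $16(x_1-x_2)^2$, we obtain
\[
(s^2-4p)(\xi^f_{11})^2-\tfrac12F_f\,\xi^f_{11}+\tfrac1{16}G=0 .
\]

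Finally, we check the degrees. Degree counts in $(s,p)$, giving $s$ weight $1$ and $p$ weight $2$, do not directly match homogeneity in $(1,\wp_{12},\wp_{22})$. What is needed is only that $F_f$ has total degree $\le 3$ in $(s,p)$ and $G$ has total degree $\le 4$ in $(s,p)$; homogenizing with $\wp_0$ then produces $K_3$ and $K_4$. For $F_f$ this is immediate: $F_f=2f_0+f_1s+2f_2p+f_3ps+2f_4p^2+f_5p^2s+2f_6p^3$ has degree $\le3$. For $G$, the highest term $p^6f_6^2$-type contributions in $F_f^2-4ff$ must cancel to leave degree $\le 6$ before division and $\le4$ after. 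This degree bound on $G$ is the main obstacle. I would handle it via the symmetry $(a,b)\mapsto(1/a,1/b)$, under which $f$ goes to its reciprocal polynomial: the leading behaviour is controlled in the same way as the constant term. Failing that, I would do a direct expansion with computer-algebra-style bookkeeping.
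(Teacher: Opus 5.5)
Your argument is correct and follows the standard Cassels--Flynn derivation: the paper gives no proof of its own but cites Cassels--Flynn, where the quartic is obtained in the same way, by isolating $y_1y_2$ in $\xi^f_{11}$, squaring, and dividing $F_f(x_1,x_2)^2-4f(x_1)f(x_2)$ by $(x_1-x_2)^2$. The step you flag as the main obstacle is in fact immediate, and no cancellation is needed. For $i\le j$ one has $a^ib^j+a^jb^i=p^i(a^{j-i}+b^{j-i})$, which has total degree at most $j\le 6$ in $(s,p)$, so $F_f^2-4f(a)f(b)$ has total degree at most $6$ in $(s,p)$; the term $f_6^2p^6$ already has total degree $6$, and your worry about it comes from mixing up total degree in $(s,p)$ with degree in $(a,b)$. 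Since total degree is additive in $\mathbb{C}[s,p]$ and $s^2-4p$ has degree exactly $2$, the quotient $G$ has total degree at most $4$.
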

	The proof of Proposition~\ref{propQuarticRelation} and explicit formulas for $K_3$ and $K_4$ are given in~\cite[Section~3.1]{CasselsFlynn}. Note, however, that the relation is given there in terms of functions $\xi_{jk}^f$ (which does not affect the proof, since $\mathscr A_f$ is bimeromorphic) and our notation slightly differs from that in~\cite{CasselsFlynn}, so the corresponding adjustments are to be made to the expressions for $K_3$ and $K_4$. For curiosity we also give a determinantal form of the relation~\eqref{eqQuartic} (a special case for polynomials in Weierstrass form is given in~\cite{baker}). That is,~\eqref{eqQuartic} is equivalent to the equality
	\begin{equation}\label{eqQuarticDet}
		\det \begin{pmatrix}
			-f_0 & \dfrac{f_1}{2} & 2\wp_{11}^f & -2\wp_{12}^f\\
			\dfrac{f_1}{2} & -f_2 - 4\wp_{11}^f - f_6\left(\wp_{12}^f\right)^2 & \dfrac{f_3}{2} + \dfrac{f_5}{2}\wp_{12}^f + f_6 \wp_{12}^f\wp_{22}^f & 2\wp_{22}^f\\
			2\wp_{11}^f & \dfrac{f_3}{2} + \dfrac{f_5}{2}\wp_{12}^f + f_6 \wp_{12}^f\wp_{22}^f & -f_4 - f_5\wp_{22}^f - f_6 \left(\wp_{22}^f\right)^2 & 2\\
			-2\wp_{12}^f & 2\wp_{22}^f & 2 & 0
		\end{pmatrix} = 0.
	\end{equation}
	It can be verified that all terms of degree $\ge 5$ in $\wp_{jk}^f$ in the left-hand side of~\eqref{eqQuarticDet} cancel out, and the remaining terms add up to the polynomial given in~\cite[Section~3.1]{CasselsFlynn}.
	
	\section{Kleinian functions of weight 2}\label{secKleinianWeight2}
	
	In this section we introduce Kleinian functions of weight $2$. We derive all necessary properties of these functions and provide a way to express classical Kleinian functions $\sigma$, $\zeta_j$, $\wp_{jk}$ through them.
	
	\subsection{Periods of certain meromorphic forms of second kind.} Following Baker~\cite{baker} we introduce the meromorphic $1$-forms $r_1^f$ and $r_1^f$ of the second kind on $\mathcal X_f$ by the formulas
	\begin{equation}\label{eqRDef}
		r_1^f = \frac{f_3x + 2f_4x^2 + 3f_5x^3 + 4f_6x^4}{4y}dx,\;\;r_2^f = \frac{f_5x^2 + 2f_6x^3}{4y}dx.
	\end{equation}
	Let $\eta^f:\Per_f \to \mathbb C^2$ denote the additive group homomorphism defined by the condition
	\begin{equation}\label{eqEtaDef}
		\eta^f(w) = -\int_\gamma \begin{pmatrix} r_1^f \\ r_2^f\end{pmatrix},\text{ where } w = \int_\gamma \begin{pmatrix} \omega_1^f \\ \omega_2^f \end{pmatrix}.
	\end{equation}
	The homomorphism $\eta^f$ is well-defined, since $r_j^f$ is of the second kind (so the integral does not depend on the choice of the closed curve representing $\gamma$) and the mapping 
	\[
	H_1(\mathcal X_f, \ZZ) \to \Per_f,\;\;\gamma \mapsto  \int_\gamma \begin{pmatrix} \omega_1^f \\ \omega_2^f \end{pmatrix}
	\]
	is a group isomorphism. In order to state the main property of the homomorphism $\eta^f$ we need to introduce the intersection pairing $\langle \cdot,\cdot\rangle_{f}$ on $\Per_f$. It is defined as the bilinear form on $\Per_f$ canonically induced from the intersection pairing on $H_1(\mathcal X_f, \ZZ)$, i.e. $\langle w_1, w_2\rangle_f$ equals $\gamma_1 \circ \gamma_2$, where $\circ$ denotes the intersection pairing on $H_1(\mathcal X_f, \ZZ)$ and
	\[
	w_j = \int_{\gamma_j} \begin{pmatrix} \omega_1^f \\ \omega_2^f \end{pmatrix}, \;\;j = 1,2.
	\]
	The intersection pairing $\langle \cdot, \cdot \rangle_f$ is a non-degenerate antisymmetric $\ZZ$-bilinear form on $\Per_f$ and there exists a basis $a_1,a_2,b_1,b_2 \in \Per_f$ such that $\langle a_j, b_j\rangle_f = 1$, $j = 1,2$ and $\langle a_1, a_2 \rangle_f = \langle b_1, b_2 \rangle_f = \langle a_1, b_2 \rangle_f = \langle a_2 , b_1\rangle_f = 0$. Any quadruple of vectors that satisfies such properties is a basis, and we shall call such quadruple a {\it{symplectic basis}} in $\Per_f$. Now we are ready to state the main property of $\eta^f$, which can be viewed as a generalization of the classic Legendre identity~\cite[Theorem~4.2]{Chandra}. Consider arbitrary symplectic basis $a_1, a_2, b_1, b_2$ in $\Per_f$ and $2 \times 2$ matrices
	\begin{equation}\label{eqMatrices}
		A = \begin{pmatrix}
			a_1 & a_2
		\end{pmatrix},\;\;
		B = \begin{pmatrix}
			b_1 & b_2
		\end{pmatrix},\;\;
		\eta_A = \begin{pmatrix}
			\eta^f(a_1) & \eta^f(a_2)
		\end{pmatrix},\;\;
		\eta_B = \begin{pmatrix}
			\eta^f(b_1) & \eta^f(b_2)
		\end{pmatrix}.
	\end{equation}
	Then the identities
	\begin{equation}\label{eqLegendre}
		\begin{gathered}
			\eta_A^T B - A^T \eta_B = B \eta_A^T - A\eta_B^T= 2i\pi I, \\
			\left(\eta_A \eta_B^T\right)^T = \eta_A\eta_B^T,\;\; (\eta_A^TA)^T = \eta_A^TA,\;\;(\eta_B^TB)^T = \eta_B^TB
		\end{gathered}
	\end{equation}
	hold. The proof of these identities is the direct application of the Riemann's bilinear relations and is carried out in~\cite[\S~8]{baker} (note that our notation differs from that in~\cite{baker} by a factor of $2$). We also point out the important corollary of~\eqref{eqLegendre}, namely
	\begin{equation}\label{eqEtaIntValues}
		\eta^f(w)^Tv - \eta^f(v)^Tw \in 2\pi i \ZZ\;\;\forall v,w \in \Per_f.
	\end{equation}
	Clearly, it is sufficient to verify this fact for $v,w \in \{a_1,a_2,b_1,b_2\}$, and in this case it is evident from~\eqref{eqLegendre}.
	
	\subsection{Kleinian functions of weight 2.}
	Consider the linear space $\mathfrak S_f$ that consists of all holomorphic functions $\phi$ on $\mathbb C^2$ that satisfy the property
	\begin{equation}\label{eqSpaceDef}
		\phi(z + w) = \exp\left[2\eta(w)^T\left(z + \frac{w}{2}\right)\right]\phi(z)
	\end{equation}
	for all $z \in \CC^2$ and $w \in \Per_f$. We shall refer to elements of the space $\mathfrak S_f$ as {\it{Kleinian functions of weight}} $2$.
	
	We shall study the space $\mathfrak S_f$ using theta functions, so we fix some notation to the end of this subsection. At first choose arbitrary symplectic basis $a_1, a_2, b_1, b_2 \in \Per_f$ and consider corresponding elements $\alpha_1, \alpha_2, \beta_1, \beta_2$ of $H_1(\mathcal X_f, \ZZ)$. Consider the matrices $A$, $B$, $\eta_A$, $\eta_B$ defined as in~\eqref{eqMatrices}. Clearly, holomorphic $1$-forms $\phi_1, \phi_2$ on $\mathcal X_f$ defined by the equation
	\begin{equation}\label{eqNormalizedDifferentials}
		\begin{pmatrix}
			\phi_1 \\ \phi_2
		\end{pmatrix} = A^{-1}\begin{pmatrix}
			\omega^f_1 \\ \omega^f_2
		\end{pmatrix}
	\end{equation}
	are normalized with respect to cycles $\alpha_1$ and $\alpha_2$. Their period matrix $\Omega = A^{-1}B$ is a {\it{Riemann matrix}}, i.e. it is symmetric and its imaginary part is positive definite. Following Mumford~\cite[Definition~II.1.2]{mumfordI} we consider the space of {\it{weight 2}} $\theta$-{\it{functions}}, i.e. the space $R_2^\Omega$ that consists of all holomorphic functions $\phi$ on $\CC^2$ such that 
	\[
	\phi(z + n + \Omega m) = \exp\left[-2i\pi m^T \Omega m -4i\pi m^T z\right]\phi(z)
	\]
	for all $z \in \CC^2$ and all $m,n \in \ZZ^2$. 
	
	\begin{proposition}\label{propSpaceS}
		The space $\mathfrak S_f$ satisfies the following properties.
		\begin{enumerate}[label=(\roman*)]
			\item\label{pSi} Let $T(\phi)(z) = \exp\left[z^T (\eta_AA^{-1})z\right]\phi(A^{-1}z)$. Then $T$ is an isomorphism of the linear space $R_2^\Omega$ onto $\mathfrak S_f$.
			\item\label{pSii} All elements of $\mathfrak S_f$ are even functions and $\dim \mathfrak S_f = 4$.
			\item\label{pSiii} For all $z \in \CC^2$ there exists $\phi \in \mathfrak S_f$ such that $\phi(z) \ne 0$. Moreover, for $z,z' \in \CC^2$ the equation $\phi(z) = \phi(z')$ holds for all $\phi \in \mathfrak S_f$ if and only if at least one of the points $z + z'$, $z - z'$ belongs to $\Per_f$.
			\item\label{pSiv} For any even degree $2$ polynomial of two variables $p(z_1,z_2) = p_0 + p_{11}z_1^2 + p_{12}z_1z_2 + p_{22}z_2^2$ there is a unique function $\phi \in \mathfrak S_f$ such that $p$ coincides with order $2$ Taylor expansion of $\phi$ at zero, i.e. $\phi(z) = p(z) + \bar{o}(z^2)$.
		\end{enumerate}
	\end{proposition}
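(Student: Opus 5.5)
Everything will go through the transfer map $T$ of part~\ref{pSi}, which reduces statements about $\mathfrak S_f$ to Mumford's theory of weight $2$ theta functions.

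For~\ref{pSi} I verify the functional equation directly. Write $M = \eta_A A^{-1}$. From the identities $(\eta_A^TA)^T = \eta_A^TA$ in~\eqref{eqLegendre} one gets that $M$ is symmetric, since $A^{-T}(\eta_A^TA)A^{-1} = A^{-T}\eta_A^T = M^T$ equals $\eta_AA^{-1}$ after transposing. For a period $w = An + Bm$ with $n,m\in\ZZ^2$ we have $A^{-1}w = n + \Omega m$ and $\eta^f(w) = \eta_A n + \eta_B m$. Compute
\[
T\phi(z+w)/T\phi(z) = \exp\left[2w^TMz + w^TMw\right]\cdot\exp\left[-2i\pi m^T\Omega m - 4i\pi m^TA^{-1}z\right].
\]
Using the Legendre relations $\eta_A^TB - A^T\eta_B = 2i\pi I$, I show $Mw - \eta^f(w) = M B m - \eta_B m = A^{-T}(\eta_A^T B - A^T\eta_B)m = 2i\pi A^{-T}m$. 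Substituting this in turns the exponent into $2\eta^f(w)^T(z + w/2)$, up to terms of the form $2\pi i\,\eta$-cross terms. These are integers times $2\pi i$ by~\eqref{eqEtaIntValues}, or more precisely $2i\pi\, m^Tn\cdot 2$, which is an even multiple. This bookkeeping, which includes keeping track of the factor $\exp[w^TMw]$ versus $\exp[\eta(w)^Tw]$, is the main computational obstacle. Since $T$ is clearly invertible (with inverse given by the analogous formula), it is an isomorphism.

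For~\ref{pSii}, \ref{pSiii} and~\ref{pSiv} I transport known facts about $R_2^\Omega$. By Mumford, $\dim R_2^\Omega = 4$, with basis $\theta\bigl[\begin{smallmatrix}a\\0\end{smallmatrix}\bigr](2z,2\Omega)$ for $a\in\{0,1/2\}^2$. These functions are even, and since the quadratic exponential factor in $T$ is even, all elements of $\mathfrak S_f$ are even. For~\ref{pSiii} I use the classical fact that the map $z\mapsto(\theta_a(2z,2\Omega))_a$ is base-point free and induces an embedding of the Kummer variety $\CC^2/(\ZZ^2+\Omega\ZZ^2)/\pm1$ into $\CC\PP(3)$ (Mumford; genus $2$ with irreducible theta divisor, which holds for Jacobians). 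This gives non-vanishing, and it gives injectivity up to projective scaling. For the actual equality statement with $z,z'$ I argue as follows. If $z' = \pm z + w$, then equality of all $\phi$ values requires the automorphy factor to be $1$, which is not automatic. So I reread the claim as saying that equal projective images occur iff $z\pm z'\in\Per_f$. For exact equality, I intend to check the forward direction via the embedding, and the converse via evenness combined with the factor.

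For~\ref{pSiv}, the map $\mathfrak S_f\to\{$even quadratics$\}$ taking $\phi$ to its $2$-jet is linear between $4$-dimensional spaces, because evenness kills the odd terms. It therefore suffices to prove injectivity. Suppose $\phi\in\mathfrak S_f$ vanishes to order $\ge 3$ at $0$, hence to order $\ge4$ by evenness. Then the image of $0$ in the Kummer surface would have to be a singular point behaving like a higher-order base point, which contradicts the fact that the $2$-jets of the basis theta functions at $0$ span. Alternatively, I would use the heat equation: the $2$-jets of $\theta_a(2z,2\Omega)$ are expressed through the thetanulls and their $\Omega$-derivatives, and nondegeneracy of the resulting $4\times4$ matrix is equivalent to the Kummer embedding being an immersion at the node $0$. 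This is the tangent cone of the node, a nondegenerate quadric cone, and it is the second main obstacle.
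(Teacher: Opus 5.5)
The genuine gap is in part (iv). Reducing to injectivity of the $2$-jet map $\mathfrak S_f \to \{\text{even quadratics}\}$ is fine. However, your first argument is circular: that the $2$-jets of the basis functions at $0$ span all even quadratics is exactly what (iv) asserts.

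Your second route names an equivalent fact: the Kummer map is a local isomorphism at the image of $0$, and that image is an ordinary node. Equivalently, the $4\times 4$ matrix of second-order thetanulls and their $\Omega$-derivatives is nonsingular. But you only assert this. Neither the set-theoretic injectivity of the Kummer map used in (iii) nor the heat equation gives it for free.

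The missing idea, which is how the paper argues, is to build elements of $R_2^\Omega$ with prescribed $2$-jets directly.
\begin{itemize}
\item For $q\in\Theta_\Omega$ the product $\theta(u-q;\Omega)\theta(u+q;\Omega)$ lies in $R_2^\Omega$. Because $\theta$ is even, its $2$-jet at $0$ is $-\ell_q(u)^2$, where $\ell_q$ is the (nonzero) differential of $\theta$ at $q$.
\item The Gauss map of $\Theta_\Omega$ is onto the projective line; for genus $2$ it is the hyperelliptic canonical map of the curve. Hence every square of a linear form occurs as such a $2$-jet.
\item These squares span the homogeneous quadratics. Adding one element not vanishing at $0$ (from (iii)), the jet map is surjective, hence bijective by the dimension count.
\end{itemize}
You could instead close your route by citing the classical theorem that $|2\Theta|$ maps the Kummer surface of a genus $2$ Jacobian isomorphically onto a quartic with sixteen nodes. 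As written, though, (iv) is not proved.

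The rest is sound.
\begin{itemize}
\item Your treatment of (i) handles a general period $w=An+Bm$ at once, via the symmetry of $M=\eta_AA^{-1}$ and the identity $Mw-\eta^f(w)=2\pi i A^{-T}m$. This is correct and tidier than the paper's separate checks for $w=Ak$ and $w=Bk$. The leftover exponent is exactly $2\pi i\,m^Tn$. Since the computation is an identity of automorphy factors, it also yields the converse inclusion that the paper omits.
\item In (iii) you are right that the literal ``if'' direction is false: for $z'=z+w$ the values differ by the factor $\exp[2\eta^f(w)^T(z+w/2)]$. So the statement must be read projectively, which is what the cited Kummer theorem gives. Note that your converse ``via evenness combined with the factor'' can only ever produce proportionality of the value vectors, not equality.
\end{itemize}
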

	\begin{proof}
		Consider any entire function $\phi$ of two variables. Note that if~\eqref{eqSpaceDef} holds for all $z$ for a given $w \in \Per_f$, then it also holds for $-w$ (and all $z$). Moreover, from~\eqref{eqEtaIntValues} it is easy to see that if~\eqref{eqSpaceDef} holds for $v,w \in \Per_f$, then it also holds for $v + w$. Thus, to verify that $\phi \in \mathfrak S_f$ it suffices to verify~\eqref{eqSpaceDef} for any basis in $\Per_f$. 
		
		Now we prove~\ref{pSi}. Assume that $\phi \in R^\Omega_2$ and consider $k \in \ZZ^2$. Then $w = Ak$ belongs to $\Per_f$ and using~\eqref{eqLegendre} we have
		\begin{multline*}
			T(\phi)(z + w) = \exp\left[(z + w)^T(\eta_A A^{-1})(z + w)\right]\phi(A^{-1}(z + w)) = \phi(A^{-1}z + k) \times \\ \exp\left[z^T(\eta_A A^{-1})z + z^T(\eta_A A^{-1})Ak + k^TA^T(\eta_A A^{-1})z + k^TA^T(\eta_A A^{-1})Ak\right] = \\
			\exp\left[z^T(\eta_A A^{-1})z + 2z^T\eta_Ak  + k^TA^T\eta_A k\right]\phi(A^{-1}z) = \\ \exp(2z^T\eta(w) + w^T\eta(w))T(\phi)(z).
		\end{multline*}
		Now let $w = Bk$ and similarly write
		\begin{multline*}
			T(\phi)(z + w)=\exp\left[(z + w)^T(\eta_A A^{-1})(z + w)\right]\phi(A^{-1}(z + w)) =\phi(A^{-1}z + \Omega k)\times \\ \exp\left[z^T(\eta_A A^{-1})z + z^T(\eta_A A^{-1})Bk + k^TB^T(\eta_A A^{-1})z + k^TB^T(\eta_A A^{-1})Bk\right] = \\
			\phi(A^{-1}z)\exp\left[-2\pi i k^T A^{-1}Bk - 4\pi i k^T A^{-1}z + z^T (\eta_A A^{-1})z\right] \times \\
			\exp\left[ z^T \left(A^T\right)^{-1}\eta_A^TBk + k^T(\eta_B^T A + 2\pi i I)A^{-1}z + k^T(\eta_B^T A + 2\pi i I)A^{-1}Bk \right] = \\
			\phi(A^{-1}z)\exp\left[-2\pi i k^T A^{-1}z + z^T (\eta_A A^{-1})z + z^T \left(A^T\right)^{-1}(A^T \eta_B + 2\pi i I)k\right] \times \\ \exp\left[\eta^f(w)^Tz + \eta^f(w)^Tw\right] = T(\phi)(z) \exp\left[2\eta^f(w)^Tz + \eta^f(w)^Tw\right].
		\end{multline*}
		Thus, if $\phi \in R_2^\Omega$, then $T(\phi) \in \mathfrak S_f$. We omit a similar calculation that shows the inverse implication, i.e. $T(\phi) \in \mathfrak S_f \Rightarrow \phi \in R_2^\Omega$. Thus, the statement~\ref{pSi} is proved.
		
		The statements~\ref{pSii} and~\ref{pSiii} follow from the corresponding properties of the space $R_2^\Omega$ (for~\ref{pSii} see~\cite[Proposition~II.1.3]{mumfordI} and for~\ref{pSiii} see~\cite[Theorem~II.1.3]{mumfordI}).
		
		Now we prove~\ref{pSiv}. Let $a(z) = a_1z_1 + a_2z_2$ be any non-zero linear form on $\CC^2$. At first we prove that there exists $\phi \in \mathfrak S_f$ such that $\phi(z) = a^2(z) + \bar{o}(z^2)$. Let $\tilde a(z) = a(Az)$. Since $\Omega$ is a period matrix of a curve of genus $2$ it is clear that theta divisor $\Theta_{\Omega}$ is a connected submanifold in $\CC^2$ (by~\cite[Theorem~I.3.1]{mumfordI} it coincides modulo translation with the image of the curve under the Abel map). Moreover, it is known that the Gauss map of the theta divisor is dominant (see, e.g.~\cite[Proposition~4.4.2]{LangeBirkenhake}), therefore there exists $q \in \Theta_\Omega$ such that $T_q\Theta_\Omega = \ker \tilde a$. That is, we have the expansion $\theta(z - q; \Omega) = c\tilde{a}(z) + \bar{o}(z)$ at zero with a constant $c \ne 0$. Since theta function is even, we conclude that $\phi(z) = \theta(z - q;\Omega)\theta(z + q, \Omega) = -c^2\tilde a(z)^2 + \bar{o}(z^2)$. Clearly, $\phi \in R_2^\Omega$ and $\tilde a(A^{-1}z)^2 = a(z)^2$ is the order $2$ Taylor expansion of $-c^{-2}T(\phi)$ at $0$. Since squares of linear forms generate the vector space of homogeneous polynomials of order $2$ it follows that for any $p(z) = p_{11}z_1^2 + p_{12}z_1z_2 + p_{22}z_2^2$ there exists $\phi \in \mathfrak S_f$ such that $\phi(z) = p(z) + \bar{o}(z^2)$ at zero. From~\ref{pSiii} it follows that $\mathfrak S_f$ contains functions that do not vanish at zero. Thus, for any even degree $2$ polynomial $p$ there exists $\phi \in \mathfrak S_f$ such that $p$ is the order $2$ Taylor expansion of $\phi$. Since the dimension of the space of degree $\le2$ even polynomials of two variables is $4$ (so it is equal to $\dim \mathfrak S_f$), the statement~\ref{pSiv} is proved.
	\end{proof}
	\begin{definition}
		For an admissible polynomial $f$ we call the fundamental Kleinian function of weight $2$ the unique element of $\mathfrak S_f$, whose Taylor expansion at zero equals $z_1^2 + \bar{o}(z^2)$. We shall denote it by $S^f$.
	\end{definition}
	
	
	All important properties of the function $S^f$ we obtain from an explicit expression of it through theta function.
	
	\begin{proposition}\label{propFormulaS}
		Let $\infty_1$ be one of the points on $\mathcal X_f$ introduced in Proposition~\ref{propPropertiesOfP}~\ref{ppiii}. Let $\Delta$ denote the Riemann's constant vector corresponding to the basis of cycles $\alpha_1, \alpha_2, \beta_1,\beta_2$ and $\infty_1$ as the selected point (see~\cite[Theorem~II.3.1]{mumfordI}).
		Then there exists a non-zero constant $c$ such that the equality
		\begin{equation}\label{eqSFormula}
			S^f(z) = c\exp\left[z^T (\eta_AA^{-1})z\right]\theta(A^{-1}z - \Delta; \Omega)\theta(A^{-1}z + \Delta; \Omega)
		\end{equation}
		holds for all $z \in \mathbb C^2$.
	\end{proposition}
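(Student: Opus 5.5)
Set $\psi(u) = \theta(u - \Delta;\Omega)\theta(u + \Delta;\Omega)$. The plan is to show that $\psi$ lies in $R_2^\Omega$. Then $T(\psi) \in \mathfrak S_f$ by Proposition~\ref{propSpaceS}~\ref{pSi}, and we will check that its order $2$ Taylor expansion at zero is a nonzero multiple of $z_1^2$. The uniqueness part of Proposition~\ref{propSpaceS}~\ref{pSiv} then forces $S^f = c\,T(\psi)$, which is exactly~\eqref{eqSFormula}.

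Membership in $R_2^\Omega$ is the standard computation for a product of theta functions at opposite shifts. Under $u \mapsto u + n + \Omega m$, each factor $\theta(u \pm \Delta)$ picks up $\exp[-i\pi m^T\Omega m - 2i\pi m^T(u \pm \Delta)]$. The $\pm\Delta$ contributions cancel in the product, leaving $\exp[-2i\pi m^T\Omega m - 4i\pi m^T u]$, as required.

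For the Taylor expansion, set $u = A^{-1}z$. The exponential prefactor is $1 + O(z^2)$ and $\psi$ is even, so the expansion of $T(\psi)$ at zero is
\[
\psi(0) + \tfrac12 z^T A^{-T}\,\mathrm{Hess}\,\psi(0)\,A^{-1}z .
\]
Since $\theta$ is even, $\theta(-\Delta) = \theta(\Delta)$, and we need three facts.
\begin{enumerate}
\item $\theta(\Delta) = 0$. By Riemann's theorem (\cite[Theorem~II.3.1]{mumfordI}) the theta divisor is $\{\mathscr A(P) + \Delta\}$, with Abel map based at $\infty_1$. Hence $\Delta$, the image of $P = \infty_1$, lies on $\Theta_\Omega$.
\item $\Delta$ is a smooth point of $\Theta_\Omega$. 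Writing $\theta(u + \Delta) = \ell(u) + O(u^2)$ with $\ell$ linear and nonzero, evenness gives $\theta(u - \Delta) = -\ell(u) + O(u^2)$, so $\psi(u) = -\ell(u)^2 + O(u^3)$. Smoothness holds because in genus $2$ the theta divisor is a translate of the curve itself, which is smooth.
\item Up to a constant, $\ell(A^{-1}z) = z_1$. This is the main step. The tangent line to $\Theta_\Omega$ at $\Delta$ is the tangent to the Abel image of the curve at $\infty_1$. In coordinates $u$ this tangent direction is $(\phi_1,\phi_2)(\infty_1)$; in coordinates $z = Au$ it is $(\omega_1^f, \omega_2^f)$ evaluated at $\infty_1$.
\end{enumerate}

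To compute that direction, use the local parameter $t = 1/x$ at $\infty_1$. Then $y \sim \pm\sqrt{f_6}\,x^3$ if $\deg f = 6$, and $y \sim \pm 2x^{5/2}$ in Weierstrass-type local behaviour if $\deg f = 5$. In both cases the ratio $\omega_1 : \omega_2 = 1 : x$ tends to $0 : 1$, so the tangent direction is $(0,1)$ in $z$-coordinates. The linear form vanishing on this direction is $\ell(A^{-1}z) = \text{const}\cdot z_1$. Therefore $T(\psi)(z) = -\kappa z_1^2 + \bar{o}(z^2)$ with $\kappa \ne 0$, and $c = -\kappa^{-1}$ is the required nonzero constant.

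The main obstacle is pinning down conventions. Mumford's Riemann constant is defined so that $\theta(u - \mathscr A(P) - \Delta)$ or $\theta(u + \mathscr A(P) + \Delta)$ vanishes, depending on sign conventions, and the base point $\infty_1$ must be placed correctly. Evenness of $\theta$, together with the fact that $-\Delta$ and $\Delta$ appear symmetrically in $\psi$, should absorb the sign ambiguity. I will also need to check that the relevant Abel-image point is $\infty_1$ itself and not some other point: since the Abel map is based at $\infty_1$, the point $u = 0$ relative to $\Delta$ corresponds to $P = \infty_1$, and that is what makes the tangent direction the one computed above.
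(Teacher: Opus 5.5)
Your proposal is correct and follows essentially the same route as the paper: you show $\theta(u-\Delta;\Omega)\theta(u+\Delta;\Omega)\in R_2^\Omega$, and you use that $\theta$ vanishes to first order at $\Delta\in\Theta_\Omega$, with tangent line the Abel image of $T_{\infty_1}\mathcal X_f$ (the second column of $A^{-1}$, since $\omega_1^f$ vanishes at $\infty_1$ and $\omega_2^f$ does not); this makes the order $2$ expansion of $T(\psi)$ a nonzero multiple of $z_1^2$, and you conclude by the uniqueness in Proposition~\ref{propSpaceS}~\ref{pSiv}. Two slips are harmless: your displayed expansion of $T(\psi)$ omits the term $\psi(0)\,z^T(\eta_AA^{-1})z$, which is irrelevant once you show $\psi(0)=0$; and $1/x$ is not a local parameter at $\infty_1$ when $\deg f=5$ (nor is $f_5=4$ assumed for admissible $f$), but the ratio argument $[\omega_1^f:\omega_2^f]=[1:x]\to[0:1]$ does not depend on this.
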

	\begin{proof}
		Let $\phi(z) = \theta(z - \Delta;\Omega)\theta(z + \Delta;\Omega)$. Clearly, $\phi \in R_2^\Omega$, so $T(\phi) \in \mathfrak S_f$ by Proposition~\ref{propSpaceS}~\ref{pSi}. So we need to find the order $2$ Taylor expansion of $\phi$ at $0$. From the definition of $\Delta$ it is clear that $\Delta \in \Theta_\Omega$ and $T_\Delta\Theta_\Omega$ is spanned by the pushforward of a non-zero tangent vector $v \in T_{\infty_1}\mathcal X_f$ with respect to the Abel map (defined via normalized $1$-forms $\phi_1$ and $\phi_2$, see ~\eqref{eqNormalizedDifferentials}). Thus, $T_\Delta\Theta_\Omega$ is spanned by the second column of $A^{-1}$ (to see this use~\eqref{eqNormalizedDifferentials} and the fact that at infinity $\omega_1^f$ has a zero and $\omega_2^f$ does not). Let $a(z) = z_1$ and $\tilde a(z) = a(Az)$. It is clear from previous considerations that $T_\Delta\Theta_\Omega = \ker \tilde a$. Thus, there exists $c \ne 0$ such that $\tilde a(z)^2$ is the order $2$ Taylor expansion of $c\phi$. Then, $cT(\phi)$ has order $2$ Taylor expansion at zero equal to $a(z)^2 = z_1^2$. By uniqueness proved in Proposition~\ref{propSpaceS}~\ref{pSiv} the function $cT(\phi)$ coincides with $S^f$. On the other hand $cT(\phi)$ clearly coincides with the right-hand size of~\eqref{eqSFormula}.
	\end{proof}
	\begin{corollary}\label{corDivisorS}
		Let $B^f_1$ and $B^f_2$ denote the submanifolds in $\Jac_f$ defined in Proposition~\ref{propPropertiesOfP}~\ref{ppv}. Let $\tilde{B}^f_j$ be the preimage of $B^f_j$ with respect to the quotient mapping $\CC^2 \to \Jac_f$. Then $(S^f) = (\tilde{B}_1^f) + (\tilde{B}_2^f)$.
	\end{corollary}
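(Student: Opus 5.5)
The plan is to read off the zero divisor of $S^f$ from the explicit formula~\eqref{eqSFormula} of Proposition~\ref{propFormulaS}. The exponential factor never vanishes, so
\[
(S^f) = \bigl(\theta(A^{-1}z - \Delta;\Omega)\bigr) + \bigl(\theta(A^{-1}z + \Delta;\Omega)\bigr).
\]
It then suffices to show two things: the zero set of the first factor is $\tilde B^f_1$, the zero set of the second is $\tilde B^f_2$, and each vanishes to order one. The linear change of variables $u = A^{-1}z$ maps $\Per_f$ onto $\ZZ^2 + \Omega\ZZ^2$. It also transforms the Abel map built from $\omega^f_1,\omega^f_2$ into the Abel map $I$ built from the normalized forms $\phi_1,\phi_2$ of~\eqref{eqNormalizedDifferentials}. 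So I will work in the $u$-coordinates on $\CC^2/(\ZZ^2+\Omega\ZZ^2)$, with base point $\infty_1$ for $I$.

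First I describe $B^f_1$ and $B^f_2$ in these terms. For $D = (P) + (\infty_2) \in A^f_2$ we get $D - \mathfrak L_f \sim (P) - (\infty_1)$, so $B^f_2 = W_1 := \{I(P) : P \in \mathcal X_f\}$. For $D = (P) + (\infty_1) \in A^f_1$ we get $D - \mathfrak L_f \sim (P) - (\infty_2)$, so $B^f_1 = W_1 - I(\infty_2)$. Since $\mathscr J_f$ swaps $\infty_1$ and $\infty_2$, \eqref{eqHyperellipticInvolutionAbel} also gives $B^f_1 = -B^f_2$.

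Next I identify the theta zero sets using Riemann's vanishing theorem in Mumford's normalization~\cite[Theorem~II.3.1]{mumfordI}, which applies in genus $2$. With base point $\infty_1$, $\theta(e;\Omega) = 0$ if and only if $e \in W_1 + \Delta$. Moreover $2\Delta = -I(K)$, where $K$ is a canonical divisor, since $I(K)$ denotes the image of $K - 2(\infty_1)$. Take $K = (\infty_1) + (\infty_2)$; this is a canonical divisor because $\mathfrak L_f$ is the canonical class. Then $I(K) = I(\infty_2)$, so $2\Delta = -I(\infty_2)$. Hence $\theta(u - \Delta) = 0$ if and only if $u \in W_1 + 2\Delta = W_1 - I(\infty_2) = B^f_1$. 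By evenness of $\theta$, $\theta(u+\Delta) = \theta(-u-\Delta)$ vanishes if and only if $-u \in B^f_1$, that is, $u \in -B^f_1 = B^f_2$. As a consistency check, $u = 0$ lies in $B^f_2$ (take $P = \infty_1$), which agrees with $\Delta \in \Theta_\Omega$ as used in the proof of Proposition~\ref{propFormulaS}.

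For multiplicities, the theta divisor is a translate of the image of the smooth curve $\mathcal X_f$ under the Abel map, which is an embedding in genus $2$. So $\theta$ vanishes to order exactly one along it, and each factor contributes its reduced divisor. When $\deg f = 5$ we have $\infty_1 = \infty_2$, so $B^f_1 = B^f_2$ and the formula correctly gives multiplicity $2$. Pulling back via $z \mapsto A^{-1}z$ and the quotient map gives $(S^f) = (\tilde B^f_1) + (\tilde B^f_2)$.

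The main obstacle is bookkeeping of sign and translation conventions: matching $\Delta$ with Mumford's normalization, the sign in $2\Delta = -I(K)$, and the base point $\infty_1$. A slip would interchange $B^f_1$ with $B^f_2$ or shift them by a $2$-torsion point. The check that $0 \in B^f_2$ is consistent with $\Delta \in \Theta_\Omega$ will confirm the choices. As a fallback, the symmetric pair $\{B^f_1, -B^f_1\}$ is already forced by the evenness of $S^f$ from Proposition~\ref{propSpaceS}~\ref{pSii}, so it is enough to pin down a single factor.
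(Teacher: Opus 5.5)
Your proposal is correct and is the paper's argument written out in detail. The paper's proof simply reads off the zero sets of the two theta factors in \eqref{eqSFormula} from the definition of $\Delta$ (Mumford's Theorem~II.3.1), and your identifications $B^f_2 = W_1$, $B^f_1 = W_1 - I(\infty_2) = -B^f_2$ and $2\Delta = -I(\infty_2)$ are right. Your check $\Delta\in\Theta_\Omega$ holds under either sign convention for $\Delta$, so it does not actually pin the convention down, but this is harmless: the opposite convention only swaps which factor yields $\tilde B^f_1$ and which yields $\tilde B^f_2$, and the sum is unchanged.

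The only soft spot is multiplicity. A smooth zero set does not by itself force $\theta$ to vanish to order one along it. Justify this instead by Riemann's singularity theorem, which gives $\mathrm{mult}_e\,\theta = h^0((P)) = 1$ in genus $2$, or by the simplicity of the $g$ zeros of $\theta(I(x)-e)$ for generic $e$.
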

	\begin{proof}
		This immediately follows from the definition of $\Delta$ (i.e.~Theorem~II.3.1 from~\cite{mumfordI}) and Proposition~\ref{propFormulaS}.
	\end{proof}
	\begin{corollary}\label{corBasisOfSpaceS}
		The functions $S^f, \wp_{11}^fS^f, \wp_{12}^fS^f, \wp_{22}^fS^f$ are entire holomorphic functions on $\CC^2$, and they constitute a basis in the space $\mathfrak S_f$. 
	\end{corollary}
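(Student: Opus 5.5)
We need to show three things: that $\wp_{jk}^fS^f$ is holomorphic, that it lies in $\mathfrak S_f$, and that the four functions are linearly independent. The first two give membership in a four-dimensional space, so independence then yields a basis by Proposition~\ref{propSpaceS}~\ref{pSii}.

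\emph{Holomorphy.} By Proposition~\ref{propPropertiesOfP}~\ref{ppv}, the polar divisor of $\wp_{jk}^f$ on $\Jac_f$ is bounded by $(B^f_1)+(B^f_2)$. Pulling back to $\CC^2$, the divisor $(\wp_{jk}^f) + (\tilde B^f_1) + (\tilde B^f_2)$ is effective. By Corollary~\ref{corDivisorS} the divisor of $S^f$ is exactly $(\tilde B^f_1)+(\tilde B^f_2)$. Hence $(\wp_{jk}^f S^f)$ is effective, so $\wp_{jk}^f S^f$ is meromorphic on $\CC^2$ without poles. Since a meromorphic function on a smooth complex manifold with effective divisor is holomorphic, $\wp^f_{jk}S^f$ is entire.

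One point needs care here. If $\deg f = 5$, then $\infty_1=\infty_2$ and $B^f_1=B^f_2$. Both part~\ref{ppv} and Corollary~\ref{corDivisorS} are stated with the sum $(B^f_1)+(B^f_2)$, so they count this divisor with multiplicity $2$ consistently. I will check that the multiplicities match in this case, using the same convention on both sides. This bookkeeping is the main (mild) obstacle.

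\emph{Membership in $\mathfrak S_f$.} The functions $\wp_{jk}^f$ are $\Per_f$-periodic, so multiplying $S^f$ by them preserves the functional equation~\eqref{eqSpaceDef}. Together with holomorphy, this shows that all four functions belong to $\mathfrak S_f$.

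\emph{Linear independence.} Suppose $c_0S^f + \sum c_{jk}\wp^f_{jk}S^f = 0$. Since $S^f\not\equiv 0$, we may divide by it on the dense open set where it is nonzero. This gives $c_0 + \sum c_{jk}\wp^f_{jk}\equiv 0$ as meromorphic functions. By Proposition~\ref{propPropertiesOfP}~\ref{ppii}, all coefficients vanish. So we have four linearly independent elements of the four-dimensional space $\mathfrak S_f$, and they form a basis.
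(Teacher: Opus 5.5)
Your proposal is correct and follows the paper's proof exactly: entireness of $\wp_{jk}^fS^f$ comes from Corollary~\ref{corDivisorS} combined with Proposition~\ref{propPropertiesOfP}~\ref{ppv}, membership in $\mathfrak S_f$ from the periodicity of $\wp_{jk}^f$, linear independence from Proposition~\ref{propPropertiesOfP}~\ref{ppii}, and the basis property from $\dim\mathfrak S_f = 4$. The degree-$5$ multiplicity issue you flag is not a real obstacle, since both cited results are stated with the same sum $(B^f_1)+(B^f_2)$, so the divisor is counted twice on both sides automatically.
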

	\begin{proof}
		The combination of Corollary~\ref{corDivisorS} and Proposition~\ref{propPropertiesOfP}~\ref{ppv} implies that function $\wp_{jk}^fS^f$ are entire, hence, they belong to $\mathfrak S_f$. Proposition~\ref{propPropertiesOfP}~\ref{ppii} implies that the functions $S^f, \wp_{11}^fS^f, \wp_{12}^fS^f, \wp_{22}^fS^f$ are linearly independent and by Proposition~\ref{propSpaceS}~\ref{pSii} they form a basis in $\mathfrak S_f$.
	\end{proof}
	
	\begin{definition}
		For an admissible polynomial $f$ we define $S^f_{jk} = \wp_{jk}^fS^f$, where $(j,k) = (1,1), (1,2), (2,2)$. We shall refer to these functions and the function $S^f$ as canonical Kleinian functions of weight 2.
	\end{definition}
	
	\subsection{Relation to the Kleinian sigma function of genus 2.}
	
	We conclude the section by a brief discussion of the connection between classical Kleinian functions of genus 2 and the functions defined above. Throughout this subsection we assume that $f$ is an admissible polynomial in Weierstrass form. Recall~\cite{Buch} that hyperelliptic $\sigma$-function for the curve $\mathcal X_f$ is defined by the formula
	\begin{equation}\label{eqSigmaDef}
		\sigma^f(z) = c\exp\left[\frac{1}{2}z^T(\eta_AA^{-1})z\right]\theta(A^{-1}z - \Delta; \Omega),
	\end{equation}
	where $A$ and $\eta_A$ are defined as in the previous subsection, and $\Delta$ is the same as in Proposition~\ref{propFormulaS}. The constant $c$ is chosen in order to satisfy the equality
	\begin{equation}\label{eqSigmaAtZero}
		\frac{\partial \sigma^f}{\partial z_1}(0,0) = 1.
	\end{equation}
	The function $\sigma^f$ does not depend on the choice of symplectic basis in $\Per_f$, as is proved in~\cite{Buch}. It is noteworthy, that there is an easy argument to prove this using the function $S^f$. At first note that $\left(\sigma^f\right)^2 = S^f$, as we show below in Proposition~\ref{propSigmaProperties}~\ref{pSigmai} (we don't need the uniqueness of $\sigma^f$ to prove this fact, only the formulas~\eqref{eqSigmaDef} and~\eqref{eqSigmaAtZero}). It follows that $\sigma^f$ is defined uniquely up to the sign change. Finally, the sign is determined by~\eqref{eqSigmaAtZero}. 
	
	The following proposition outlines fundamental properties of the function $\sigma^f$.
	
	\begin{proposition}\label{propSigmaProperties}
		Let $f$ be an admissible polynomial in Weierstrass form. Then the following statements hold.
		\begin{enumerate}[label=(\roman*)]
			\item\label{pSigmai} The function $\sigma^f$ is odd and $\left(\sigma^f\right)^2 = S^f$.
			\item\label{pSigmaii} For $(j,k) = (1,1), (1,2),(2,2)$ the equality
			\begin{equation}\label{eqLogDerP}
				\frac{\partial^2}{\partial z_j \partial z_k} \ln \sigma^f = -\wp_{jk}^f
			\end{equation}
			holds.
			\item\label{pSigmaiii} The function $\sigma^f$ satisfies the addition formula
			\[
			\frac{\sigma^f(u + v)\sigma^f(u - v)}{\sigma^f(u)^2\sigma^f(v)^2} = \wp_{22}^f(u)\wp_{12}^f(v) - \wp_{22}^f(v)\wp_{12}^f(u) + \wp_{11}^f(v) - \wp_{11}^f(u).
			\]
		\end{enumerate}
	\end{proposition}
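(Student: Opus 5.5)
We prove the three statements in order; each rests on the previous one.

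For (i), note that the Riemann constant vector $\Delta$ attached to the base point $\infty_1$ is a half-period, since in Weierstrass form $\infty_1$ is a Weierstrass point. Hence $\theta(-u-\Delta;\Omega)=\theta(u+\Delta;\Omega)=\pm\theta(u-\Delta;\Omega)$, where the sign is fixed by the characteristic of $\Delta$. Together with evenness of $\theta$, this gives $\sigma^f(-z)=\pm\sigma^f(z)$.

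Since $\sigma^f$ vanishes at $0$ (because $\Delta\in\Theta_\Omega$) and $\partial\sigma^f/\partial z_1(0)=1\neq 0$, the function cannot be even, so it is odd. Using the same identity $\theta(u+\Delta)=-\theta(u-\Delta)$, the square $(\sigma^f)^2$ equals $-c^2$ times the right-hand side of \eqref{eqSFormula} up to the constant. So $(\sigma^f)^2\in\mathfrak S_f$, by Proposition~\ref{propSpaceS}~\ref{pSi} and the proof of Proposition~\ref{propFormulaS}.

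As $\sigma^f$ is odd with linear Taylor term $z_1+\lambda z_2$, its square has order-$2$ expansion $(z_1+\lambda z_2)^2$. We must show $\lambda=0$. Following Proposition~\ref{propFormulaS}, the tangent space $T_\Delta\Theta_\Omega$ is spanned by the second column of $A^{-1}$, because $\omega_1^f$ vanishes at $\infty_1$. So the gradient of $\theta(A^{-1}z-\Delta)$ at $0$ annihilates $Ae_2$, which forces $\partial_{z_2}\sigma^f(0)=0$. Uniqueness in Proposition~\ref{propSpaceS}~\ref{pSiv} then gives $(\sigma^f)^2=S^f$.

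For (ii), set $g_{jk}=-\partial_j\partial_k\ln\sigma^f$. From \eqref{eqSigmaDef} and the quasi-periodicity of $\theta$ (equivalently, from the multiplier of $S^f=(\sigma^f)^2$ in \eqref{eqSpaceDef}), the logarithm of $\sigma^f(z+w)/\sigma^f(z)$ is affine in $z$. Hence the $g_{jk}$ are $\Per_f$-periodic meromorphic functions on $\Jac_f$ with poles only along $\tilde B_1^f+\tilde B_2^f$, of order at most $2$.

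The main obstacle is identifying the $g_{jk}$ with $\wp_{jk}^f$. I would first try the classical route via the Abel map. Compute $\partial_j\ln\sigma^f$ along $\mathscr A_f(D)$ using the Riemann vanishing theorem and reciprocity for the second-kind differentials $r_j^f$: the function $\zeta_j=\partial_j\ln\sigma^f$ pulled back to $\mathcal X_f^{(2)}$ is expressed as $\sum_i\int^{P_i}r_j^f$ plus a correction term. Differentiating with respect to $z$ by inverting the Jacobian matrix $\bigl(\omega_k^f(P_i)\bigr)$ then yields exactly the rational expressions $\xi_{jk}^f$, where the cross term involving $y_1y_2$ produces the form of $\xi_{11}^f$ with $F_f$.

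As a fallback, I would compare polar parts and the Taylor data. Both $g_{jk}$ and $\wp_{jk}^f$ lie in the space $L(2\Theta)$ spanned by $1,\wp_{11}^f,\wp_{12}^f,\wp_{22}^f$ (Corollary~\ref{corBasisOfSpaceS}, since $g_{jk}S^f=-\bigl(\sigma\,\partial_j\partial_k\sigma-\partial_j\sigma\,\partial_k\sigma\bigr)$ is in $\mathfrak S_f$ by a direct multiplier check). Matching the expansions near $0$ then pins down the coefficients, and uses the computation of the Taylor expansions of $\wp$ at $B_1^f$.

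For (iii), fix $v$ and consider $\Phi(u)=\sigma^f(u+v)\sigma^f(u-v)$. Its multiplier in $u$ equals that of $S^f$, since the cross terms in $v$ cancel by \eqref{eqEtaIntValues}, so $\Phi\in\mathfrak S_f$. By Corollary~\ref{corBasisOfSpaceS},
\[
\Phi=\sigma^f(v)^2\bigl(c_0+c_{11}\wp_{11}^f+c_{12}\wp_{12}^f+c_{22}\wp_{22}^f\bigr)S^f(u).
\]
The coefficients are found by letting $u\to v$: $\Phi$ vanishes at $u=v$, which imposes the relation on $c_0,\dots,c_{22}$ needed to match the right-hand side. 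Expanding at $u\to0$ gives $\Phi(u)=-\sigma^f(v)^2\bigl(u_1^2+\dots\bigr)$ plus $\wp$-terms; this uses oddness and (ii) to obtain the second-order Taylor terms of $\sigma^f(v\pm u)$ in $u$. Comparing the quadratic parts fixes the coefficients of $\wp_{11}^f,\wp_{12}^f,\wp_{22}^f$ and the constant. This requires the Taylor expansion of $\sigma^f$ at $0$ to order $3$, e.g. $\sigma^f=z_1+O(|z|^3)$, which follows from oddness and (i).
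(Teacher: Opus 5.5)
Your part (i) follows the paper's argument. There is a small slip: the gradient of $\theta(A^{-1}z-\Delta;\Omega)$ at $0$ annihilates $e_2$, not $Ae_2$. There is also a caveat you share with the paper. An odd half-period $\Delta=\tfrac12(n'+\Omega m')$ necessarily has $m'\neq0$, and then $\theta(w+\Delta;\Omega)=-e^{-2\pi i (m')^{T}w}\theta(w-\Delta;\Omega)$. So the identity $\theta(u+\Delta)=\pm\theta(u-\Delta)$, and with it the oddness of $\sigma^f$, hold only if \eqref{eqSigmaDef} is read with the theta function with characteristic $\Delta$.

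\textbf{The gap is in (ii).} The paper simply cites \cite[\S~11]{baker} for this part. Your ``classical route'' only names the computation. You do not identify the correction term (it is $\tfrac12\lambda^f$ in $\zeta^f_1$ and absent in $\zeta^f_2$, cf.~\eqref{eq1DiffS}), and you do not say why the difference vanishes identically.

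Your fallback cannot replace this. The step $g_{jk}S^f\in\mathfrak S_f$, hence $g_{jk}\in\operatorname{span}\{1,\wp^f_{11},\wp^f_{12},\wp^f_{22}\}$, is correct. But the coefficients are not determined by the data you list. Replace $r^f_j$ by $r^f_j+\sum_k M_{jk}\omega^f_k$ with $M$ symmetric. This preserves \eqref{eqLegendre}, the functions $\wp^f_{jk}$ with all their polar parts, the normalization \eqref{eqSigmaAtZero}, and the property $\sigma^f=z_1+O(|z|^3)$. Yet it multiplies $\sigma^f$ by $\exp(-\tfrac12 z^TMz)$ and so shifts $g_{jk}$ by $M_{jk}$.

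Hence \eqref{eqLogDerP} depends on the precise forms \eqref{eqRDef}, which your matching never uses. They would enter only through the cubic Taylor coefficients of $\sigma^f$ at $0$, via $g_{jk}S^f=\partial_j\sigma^f\partial_k\sigma^f-\sigma^f\partial_j\partial_k\sigma^f$. You do not compute these, and in the paper they are themselves a consequence of Proposition~\ref{propDiffS}.

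That proposition is the missing idea. The functions $\partial_1\ln S^f+2\rho^f_1-\lambda^f$ and $\partial_2\ln S^f+2\rho^f_2$ are $\Per_f$-periodic and odd. By Lemma~\ref{lemRhoProperties} their products with $S^f$ are entire, so these products are odd elements of $\mathfrak S_f$ and vanish by Proposition~\ref{propSpaceS}~\ref{pSii}. The chain-rule computation then gives \eqref{eq2DiffS}. For $f_5=4$, $f_6=0$ this reads $\partial_j\partial_k\ln S^f=-2\wp^f_{jk}$, which is \eqref{eqLogDerP} because $S^f=(\sigma^f)^2$.

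\textbf{On (iii).} Granting (ii), your argument is a genuine alternative to the citation of \cite[\S~30]{baker}, but it needs more than you state. The order-$2$ Taylor polynomial of $\Phi$ at $u=0$ is $-\sigma^f(v)^2\bigl(1-\wp^f_{11}(v)u_1^2-2\wp^f_{12}(v)u_1u_2-\wp^f_{22}(v)u_2^2\bigr)$. In particular $\Phi(0)=-\sigma^f(v)^2\neq0$ for generic $v$, so your expansion $-\sigma^f(v)^2(u_1^2+\cdots)$ is wrong.

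You want to conclude via Proposition~\ref{propSpaceS}~\ref{pSiv} that $\Phi=\sigma^f(v)^2\bigl(\wp^f_{11}(v)S^f-S^f_{11}+\wp^f_{12}(v)S^f_{22}-\wp^f_{22}(v)S^f_{12}\bigr)$. For that you need the expansions \eqref{eqTaylorExpansions} of all four basis functions, i.e.\ Theorem~\ref{thTaylorExpansions}, whose part for $S^f_{11}$ again rests on Proposition~\ref{propDiffS}. Knowing $\sigma^f=z_1+O(|z|^3)$ only handles $S^f$. The vanishing of $\Phi$ at $u=v$ is not needed.
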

	\begin{proof}
		To prove the statement~\ref{pSigmai} note that for polynomials $f$ such that $\deg f = 5$ the vector $\Delta$ from Proposition~\ref{propFormulaS} is an odd half-period, so $\theta(z - \Delta;\Omega) = -\theta(z + \Delta;\Omega)$. Thus, by~\eqref{eqSFormula} we get that 
		\[
		S^f = c\exp[z^T(\eta_A A^{-1})z]\theta(A^{-1}z - \Delta;\Omega)^2
		\]
		with a suitable constant $c$. From~\eqref{eqSigmaDef} and~\eqref{eqSigmaAtZero} now it is evident that $\left(\sigma^f\right)^2 = S^f$. The fact that $\sigma^f$ is odd is obvious either from the definition~\eqref{eqSigmaDef}, or by virtue of the fact that it is necessarily either odd, or even (since its square is even) and condition~\eqref{eqSigmaAtZero} is possible only if $\sigma^f$ is odd.
		
		The statements~\ref{pSigmaii} and~\ref{pSigmaiii} are proved in~\cite[\S~11]{baker} and~\cite[\S~30]{baker} respectively (see also~\cite{Buch} for generalizations for higher genera).
	\end{proof}
	The formula~\eqref{eqLogDerP} explains the choice of the subscripts in the notation $\wp_{jk}^f$. Following Baker~\cite{baker} for the polynomial $f$ in Weierstrass form we shall denote by $\wp_{jkl}^f$ the third logarithmic derivatives with respect to corresponding variables, i.e.
	\[
	\wp_{jkl}^f = -\frac{\partial^3}{\partial z_j\partial z_k \partial z_l} \ln \sigma^f.
	\]
	Similar notation is adopted also for derivatives of order $\ge 4$ but we shall not use them. For the first logarithmic derivatives of $\sigma^f$ the letter $\zeta$ is used, i.e. $\zeta_j^f = \partial \ln \sigma^f / \partial z_j$.
	
	\begin{remark}
		It should be noted that for polynomials $f$ not in Weierstrass form there is no function $\phi$ such that the equations~\eqref{eqLogDerP} are satisfied with $\phi$ instead of $\ln \sigma^f$. Indeed, by using the method of differentiation described below in Proposition~\ref{propDiffS} it can be checked that in general \[
		\frac{\partial \wp_{11}^f}{\partial z_2} \ne \frac{\partial \wp_{12}^f}{\partial z_1}.
		\]
	\end{remark}
	
	We conclude the subsection by showing that classical Kleinian functions can be expressed through Kleinian functions of weight $2$ and their first derivatives. For the functions $\wp_{jk}^f$ and $\zeta_j^f$ such an expression is obvious. In order to express the function $\sigma^f$ we use the {\it{duplication formula}} for $\sigma^f$.
	\begin{proposition}\label{propDuplication}
		Let $f$ be an admissible polynomial in Weierstrass form. Then the equality
		\begin{multline}\label{eqDuplication}
			\sigma^f(2z) = \sigma^f(z)^4(\wp_{122}^f(z)\wp_{12}^f(z) - \wp_{22}^f(z)\wp_{112}^f(z) - \wp_{111}^f(z)) = \\ S^f(z)^2(\wp_{122}^f(z)\wp_{12}^f(z) - \wp_{22}^f(z)\wp_{112}^f(z) - \wp_{111}^f(z))
		\end{multline}
		holds for a generic $z \in \CC^2$.
	\end{proposition}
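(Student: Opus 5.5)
The plan is to derive the duplication formula from the addition formula of Proposition~\ref{propSigmaProperties}~\ref{pSigmaiii} by a limiting process. The second equality in~\eqref{eqDuplication} is immediate from $(\sigma^f)^2 = S^f$, so only the first equality needs proof. Fix a generic $z$ and put $u = z + t e$, $v = z$, where $t$ is a small complex parameter and $e$ is a fixed vector (for instance $e = (1,0)^T$ or $e=(0,1)^T$). The addition formula then gives
\[
\frac{\sigma^f(2z + te)\,\sigma^f(te)}{\sigma^f(z+te)^2\sigma^f(z)^2} = \wp_{22}^f(z+te)\wp_{12}^f(z) - \wp_{22}^f(z)\wp_{12}^f(z+te) + \wp_{11}^f(z) - \wp_{11}^f(z+te).
\]
We divide both sides by $t$ and let $t \to 0$.

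On the left-hand side, $\sigma^f$ is odd with $\sigma^f(0)=0$. Its linear Taylor term at zero is $z_1$: by~\eqref{eqSigmaAtZero} and the fact that $S^f = (\sigma^f)^2$ has order two expansion $z_1^2$, we get $\partial\sigma^f/\partial z_2(0) = 0$. So for $e = (1,0)^T$ we have $\sigma^f(te)/t \to 1$, and the left-hand side divided by $t$ tends to $\sigma^f(2z)/\sigma^f(z)^4$. On the right-hand side, the limit is the directional derivative along $e_1$:
\[
\partial_1\wp_{22}^f(z)\,\wp_{12}^f(z) - \wp_{22}^f(z)\,\partial_1\wp_{12}^f(z) - \partial_1\wp_{11}^f(z).
\]
With Baker's notation $\wp_{jkl}^f = \partial_l \wp_{jk}^f$, which is symmetric in its indices because $\wp_{jk}^f = -\partial_j\partial_k\ln\sigma^f$ by Proposition~\ref{propSigmaProperties}~\ref{pSigmaii}, this equals $\wp_{122}^f\wp_{12}^f - \wp_{22}^f\wp_{112}^f - \wp_{111}^f$. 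This is exactly the bracket in~\eqref{eqDuplication}. Multiplying by $\sigma^f(z)^4$ gives the claim at generic $z$, i.e. away from the zeros of $\sigma^f$ and the poles of the $\wp$'s.

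The steps are therefore: (1) specialize the addition formula as above; (2) check that the linear part of $\sigma^f$ at the origin is $z_1$, using~\eqref{eqSigmaAtZero}, oddness, and $S^f = z_1^2 + \bar o(z^2)$; (3) pass to the limit, using continuity of $\sigma^f$ and the holomorphy of $\wp_{jk}^f$ near a generic $z$; (4) identify the derivatives via the symmetry of third logarithmic derivatives.

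The only delicate point is step (2): we need the precise normalization of the linear term, and in particular that the $z_2$-coefficient vanishes, so that the choice $e = e_1$ gives limit $1$ rather than some other constant. This is settled by comparing with the defining Taylor expansion of $S^f$. One must also keep track of the sign convention in Proposition~\ref{propSigmaProperties}~\ref{pSigmaiii}; the assignment $u = z+te$, $v = z$ is chosen so that the signs produce exactly $-\wp_{111}^f$ with a positive leading factor.
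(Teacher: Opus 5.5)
Your argument is correct and is essentially the paper's proof: the paper also applies the addition formula to two points differing by $te_1$ and lets $t \to 0$, taking $u$ fixed and $v_t = u + te_1$ and using $\sigma^f(u - v_t)/t \to -1$, whereas your assignment $u = z + te_1$, $v = z$ gives the sign directly. As a minor remark, for $e = e_1$ the limit $\sigma^f(te_1)/t \to 1$ follows from~\eqref{eqSigmaAtZero} alone, so the vanishing of $\partial\sigma^f/\partial z_2(0)$ is not needed; the alternative $e = e_2$ you mention would not give the formula, since then the left-hand side divided by $t$ tends to $0$.
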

	\begin{proof}
		Fix generic $u = \begin{pmatrix} u_1 & u_2 \end{pmatrix}^T$ and consider $v_t = \begin{pmatrix} u_1 + t & u_2\end{pmatrix}^T$. Application of the addition formula from Proposition~\ref{propSigmaProperties}~\ref{pSigmaiii} to $u$ and $v_t$ and passing to the limit $t \to 0$ yield~\eqref{eqDuplication} (note that $\sigma(u - v_t)/t \to -1$ when $t \to 0$).
	\end{proof}
	
	\begin{corollary}\label{corBetterDuplication}
		Let $f$ be an admissible polynomial in Weierstrass form. Then the equality
		\begin{equation}\label{eqDuplicationBetter}
			\sigma^f(2z) = S_{12}^f(z)\frac{\partial S_{22}^f}{\partial z_1}(z) - S_{22}^f(z)\frac{\partial S_{12}^f}{\partial z_1}(z) + S_{11}^f(z)\frac{\partial S^f}{\partial z_1}(z) - S^f(z)\frac{\partial S_{11}^f}{\partial z_1}(z)
		\end{equation}
		holds for all $z \in \CC^2$.
	\end{corollary}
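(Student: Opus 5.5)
Starting from Proposition~\ref{propDuplication}, I would rewrite the right-hand side of~\eqref{eqDuplication} in terms of $S^f$ and $S^f_{jk}$. For $z$ outside the zero set of $S^f$ I write $S^f_{jk} = \wp^f_{jk}S^f$. A direct computation then gives
\[
S_{12}^f\partial_1 S_{22}^f - S_{22}^f\partial_1 S_{12}^f = (S^f)^2\left(\wp_{12}^f\partial_1\wp_{22}^f - \wp_{22}^f\partial_1\wp_{12}^f\right).
\]
This holds because the terms containing $\partial_1 S^f$ cancel: $\wp_{12}^f\wp_{22}^f S^f\partial_1 S^f - \wp_{22}^f\wp_{12}^f S^f\partial_1 S^f = 0$. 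In the same way,
\[
S_{11}^f\partial_1 S^f - S^f\partial_1 S_{11}^f = -(S^f)^2\partial_1\wp_{11}^f.
\]
Adding the two identities, the right-hand side of~\eqref{eqDuplicationBetter} equals
\[
(S^f)^2\left(\wp_{12}^f\partial_1\wp_{22}^f - \wp_{22}^f\partial_1\wp_{12}^f - \partial_1\wp_{11}^f\right).
\]

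Next I identify the derivatives with Baker's three-index functions. Since $f$ is in Weierstrass form, Proposition~\ref{propSigmaProperties}~\ref{pSigmaii} gives $\wp^f_{jk} = -\partial_j\partial_k\ln\sigma^f$. Hence $\partial_1\wp_{jk}^f = \wp^f_{1jk}$, so $\partial_1\wp_{22}^f = \wp_{122}^f$, $\partial_1\wp_{12}^f = \wp_{112}^f$ and $\partial_1\wp_{11}^f = \wp_{111}^f$. Substituting these, the expression above becomes exactly the second line of~\eqref{eqDuplication}. Proposition~\ref{propDuplication} then shows that~\eqref{eqDuplicationBetter} holds for generic $z$.

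Finally I pass from generic $z$ to all $z\in\CC^2$. By Corollary~\ref{corBasisOfSpaceS} the functions $S^f$ and $S^f_{jk}$ are entire, so the right-hand side of~\eqref{eqDuplicationBetter} is entire. The left-hand side $\sigma^f(2z)$ is entire as well. Two entire functions that agree on a dense open set agree everywhere. The only delicate point is that ``generic'' in Proposition~\ref{propDuplication} means outside a proper analytic subset (the polar loci of $\wp^f_{jk}$, $\wp^f_{jkl}$ together with the zeros of $S^f$), whose complement is open and dense. With that observed, the identity extends to all of $\CC^2$. None of these steps is a real obstacle: the computation in the first paragraph is the only substantive work, and it is elementary.
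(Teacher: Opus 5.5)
Your proposal is correct and is essentially the paper's proof. Both make the same direct computation, rewriting the right-hand side of \eqref{eqDuplicationBetter} as $(S^f)^2\bigl(\wp_{122}^f\wp_{12}^f - \wp_{22}^f\wp_{112}^f - \wp_{111}^f\bigr)$ via $\partial_1\wp^f_{jk} = \wp^f_{1jk}$, and then substitute into Proposition~\ref{propDuplication}. Your closing continuity argument, which extends the identity from generic $z$ to all of $\CC^2$ using that both sides are entire, makes explicit a step the paper leaves implicit.
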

	\begin{proof}
		The direct calculation shows that \[
		\begin{gathered}
			S_{12}^f(z)\frac{\partial S_{22}^f}{\partial z_1}(z) - S_{22}^f(z)\frac{\partial S_{12}^f}{\partial z_1}(z) = S^f(z)^2\left(\wp_{122}^f(z)\wp_{12}^f(z) - \wp_{22}^f(z)\wp_{112}^f(z)\right),\\
			S^f(z)\frac{\partial S_{11}^f}{\partial z_1}(z) - S_{11}^f(z)\frac{\partial S^f}{\partial z_1}(z) = S_f(z)^2 \wp_{111}^f(z).
		\end{gathered}
		\]
		The substitution of these identities into~\eqref{eqDuplication} yields~\eqref{eqDuplicationBetter}.
	\end{proof}
	
	\section{Taylor expansions of Kleinian functions of weight 2}\label{secTaylorExpansions}
	
	In this section we calculate order $2$ Taylor expansions of functions $S_{jk}^f$. These expansions will be crucial for our further developments. At first let us formulate the final result.
	
	\begin{theorem}\label{thTaylorExpansions}
		Let $f$ be an admissible polynomial. Then the expansions
		\begin{equation}\label{eqTaylorExpansions}
			S_{22}^f(z) = 2z_1z_2 + \bar{o}(z^2),\;\;
			S_{12}^f(z) = -z_2^2 + \bar{o}(z^2),\;\;
			S_{11}^f(z) = 1 + \bar{o}(z^2)
		\end{equation}
		hold.
	\end{theorem}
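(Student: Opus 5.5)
The plan is to read off the order $2$ Taylor data of $S^f_{jk}=\wp^f_{jk}S^f$ by letting $z\to 0$ along explicit families of divisors on $\mathcal X_f^{(2)}$ that approach the blown-down curve $L_f$. By Corollary~\ref{corBasisOfSpaceS} and Proposition~\ref{propSpaceS}~\ref{pSii}, each $S^f_{jk}$ is even and entire. Hence $S^f_{jk}(z)=c_{jk}+q_{jk}(z)+\bar o(z^2)$ with a constant $c_{jk}$ and a quadratic form $q_{jk}$, and by Proposition~\ref{propSpaceS}~\ref{pSiv} these four numbers determine the function. A quadratic form is determined by its values on three pairwise non-proportional vectors, so it suffices to evaluate $q_{jk}$ on an open family of directions.

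\textbf{Step 1 (the curves).} Fix $x_0$ with $f(x_0)\neq0$, a branch $u$ of $\sqrt f$ near $x_0$, and small $s$. Put
\[
D(s)=(x_0+s,u(x_0+s))+(x_0-s,-u(x_0-s)),\qquad z(s)=\mathscr A_f(D(s)).
\]
Swapping $s\mapsto -s$ replaces $D(s)$ by its image under $\mathscr J_f$, so by~\eqref{eqHyperellipticInvolutionAbel} we get $z(-s)=-z(s)$. Integrating $\omega^f_1,\omega^f_2$ from $(x_0-s,u(x_0-s))$ to $(x_0+s,u(x_0+s))$ gives
\[
z(s)=\tfrac{2s}{u(x_0)}(1,x_0)^T+O(s^3).
\]
Hence $t:=2s/u(x_0)$ satisfies $z_1=t+O(t^3)$ and $z_2=x_0t+O(t^3)$. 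By Corollary~\ref{corDivisorS}, $S^f$ does not vanish identically on these curves, and from the definition of $S^f$ we have $S^f(z(s))=t^2+O(t^4)$.

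\textbf{Step 2 ($S^f_{22}$, $S^f_{12}$, and the constants).} On $D(s)$ we have $\xi_{22}^f=2x_0$ exactly and $\xi_{12}^f=-(x_0^2-s^2)$. Therefore
\[
S^f_{22}(z(s))=2x_0t^2+O(t^4),\qquad S^f_{12}(z(s))=-x_0^2t^2+O(t^4).
\]
This gives $c_{22}=c_{12}=0$, $q_{22}(1,x_0)=2x_0$ and $q_{12}(1,x_0)=-x_0^2$ for all $x_0$ in an open set. These agree with $2z_1z_2$ and $-z_2^2$ evaluated at $(1,x_0)$, which proves the first two expansions. For $\xi_{11}^f$, use $F_f(x,x)=2f(x)$ and $y_2\approx -y_1$. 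The numerator tends to $4f(x_0)$ and the denominator equals $16s^2$, so $\wp^f_{11}(z(s))=t^{-2}(1+O(t^2))$. Hence $c_{11}=1$.

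\textbf{Step 3 (the main obstacle: $q_{11}=0$).} The $t^2$-coefficient of $S^f_{11}(z(s))$ depends on the $t^2$-correction of $\wp^f_{11}$, which is computable from Taylor expansions in $s$. It also depends on the quartic Taylor terms of $S^f$, which we do not yet know. So the direct limit alone does not suffice.

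My first attempt would be to multiply the quartic relation~\eqref{eqQuartic} by $(S^f)^4$. This gives the homogeneous identity
\[
\bigl((S^f_{22})^2+4S^fS^f_{12}\bigr)(S^f_{11})^2+K_3(S^f,S^f_{12},S^f_{22})\,S^f_{11}+K_4(S^f,S^f_{12},S^f_{22})=0
\]
among elements of $\mathfrak S_f$. Insert the known data $S^f=z_1^2+\dots$, $S^f_{22}=2z_1z_2+\dots$, $S^f_{12}=-z_2^2+\dots$, $S^f_{11}=1+q_{11}+\dots$. Note that $(S^f_{22})^2+4S^fS^f_{12}$ cancels at order $4$. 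Then compare the lowest-order homogeneous parts to obtain linear conditions forcing $q_{11}=0$; this needs the explicit $K_3$, $K_4$ from~\cite{CasselsFlynn}.

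If this turns out to be underdetermined, the fallback is to pin down the quartic part of $S^f$ instead. One would compute $S^f(z(s))$ to order $s^4$ through $S^f=S^f_{22}/\wp^f_{22}$ along a second family of curves, with $\xi^f_{22}$ not constant. Then one carries the $s^2$-correction of $\xi^f_{11}$ and the $s^3$-term of $z(s)$ through Step 2.
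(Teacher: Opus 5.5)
Your Steps 1 and 2 are correct. For the first two expansions and for $S^f_{11}(0)=1$ they are more direct than the paper's route, which goes through Lemma~\ref{lemSP12expansion}, the quartic relation in Corollary~\ref{corPartialExpansions}, a limit computation of $\partial^2S^f_{22}/\partial z_1\partial z_2$ at $0$, and the differential identity for $p_0$. Your symmetric family $D(s)$, on which $\xi^f_{22}\equiv 2x_0$, reads off $q_{22}(1,x_0)$, $q_{12}(1,x_0)$ and $c_{11}$ at once.

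The genuine gap is Step 3, and neither of your routes can close it, for a structural reason.

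\textbf{Why both routes fail.} Take any even holomorphic germ $h=1+h_2(z)+O(|z|^4)$ at $0$, with $h_2$ an arbitrary quadratic form. The quadruple $hS^f, hS^f_{11}, hS^f_{12}, hS^f_{22}$ has the following properties:
\begin{enumerate}
\item it satisfies the same homogeneous quartic identity;
\item it satisfies $hS^f_{jk}=\xi^f_{jk}\cdot hS^f$ along every family of divisors;
\item it has all the data you established: $2$-jets $z_1^2$, $-z_2^2$, $2z_1z_2$, and value $1$ at $0$ for the fourth function.
\end{enumerate}
Yet the quadratic part of $hS^f_{11}$ is $q_{11}+h_2$. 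So no comparison of Taylor coefficients in the quartic relation can force $q_{11}=0$, whatever $K_3,K_4$ are, and neither can any evaluation along curves.

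The fallback is also circular as stated. Getting $S^f$ to fourth order from $S^f_{22}/\wp^f_{22}$ needs the quartic part of $S^f_{22}$, which is just as unknown. Moreover, the coefficients of $z_1^4$, $z_1^3z_2$, $z_1^2z_2^2$ in $S^f$ are exactly what $h$ shifts.

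\textbf{What is missing.} You need input that is not invariant under $S^f\mapsto hS^f$, i.e.\ input that encodes $S^f\in\mathfrak S_f$ globally. This is supplied by the identities~\eqref{eq2DiffS} of Proposition~\ref{propDiffS}. They rest on $\partial_{z_j}\ln S^f=-2\rho^f_j+\delta_{j1}\lambda^f$, which the paper proves from the quasi-periodicity of $\rho^f_j$ and the absence of odd elements in $\mathfrak S_f$.

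Write $S^f=z_1^2+az_1^4+bz_1^3z_2+cz_1^2z_2^2+dz_1z_2^3+ez_2^4+O(|z|^6)$ and $q_{11}=p_{11}z_1^2+p_{12}z_1z_2+p_{22}z_2^2$. Multiply the identities~\eqref{eq2DiffS} by $(S^f)^2$ and compare coefficients:
\begin{enumerate}
\item The mixed identity and the $\partial_2^2$ identity give $b=c=0$.
\item The first identity becomes $S^f\partial_1^2S^f-(\partial_1S^f)^2=-2S^fS^f_{11}-f_6(S^f_{12})^2$. Its $z_1^4$, $z_1^3z_2$ and $z_1^2z_2^2$ coefficients give $p_{11}=(c_{11}-1)a$, $p_{12}=b$ and $p_{22}=c$.
\end{enumerate}
Since $c_{11}=1$, this yields $q_{11}=0$. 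This is how the paper concludes.
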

	
	We prove Theorem~\ref{thTaylorExpansions} in several steps.
	
	\begin{lemma}\label{lemValuesAtZero}
		Let $f$ be an admissible polynomial. Then $S_{12}^f(0) = S_{22}^f(0) = 0$ and $S_{11}^f(0) \ne 0$.
	\end{lemma}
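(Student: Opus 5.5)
We need to show $S_{12}^f(0)=S_{22}^f(0)=0$ and $S_{11}^f(0)\ne 0$. Since $S^f(z)=z_1^2+\bar o(z^2)$ and $S^f_{jk}=\wp^f_{jk}S^f$ is entire (Corollary~\ref{corBasisOfSpaceS}), the value $S_{jk}^f(0)$ is the limit of $\wp_{jk}^f(z)S^f(z)$ as $z\to 0$ along any convenient path. We take a path in $\Jac_f$ parametrized by divisors. Near $0$ in $\Jac_f$, the points $\mathscr A_f(D)$ with $D$ close to the line $L_f$ are relevant, because $\mathscr A_f$ blows $L_f$ down to $0$. We use $D(t)=(x_0,-u(x_0))+(x_0+t,u(x_0+t))$ as in the proof of Proposition~\ref{propPropertiesOfP}~\ref{ppii}. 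Then $z(t)=\mathscr A_f(D(t))\to 0$ as $t\to0$, and to first order $z(t)\approx t\,(1/u(x_0),\,x_0/u(x_0))^T$, since the Abel map differential is $\int(\omega_1,\omega_2)$ from the point $(x_0,u(x_0))$ to $(x_0+t,u(x_0+t))$.

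The vanishing statements follow first. Along this path $\wp^f_{22}(z(t))\to 2x_0$ and $\wp^f_{12}(z(t))\to -x_0^2$ stay bounded, while $S^f(z(t))\to S^f(0)=0$. Hence $S^f_{22}(0)=S^f_{12}(0)=0$.

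Next we show $S_{11}^f(0)\neq0$. Suppose instead $S_{11}^f(0)=0$. Then all four basis functions of $\mathfrak S_f$ vanish at $0$, by Corollary~\ref{corBasisOfSpaceS}. This contradicts Proposition~\ref{propSpaceS}~\ref{pSiii}, which gives some $\phi\in\mathfrak S_f$ with $\phi(0)\ne0$. So $S_{11}^f(0)\neq 0$. The path computation is not even needed for this part.

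The only delicate point is justifying the limits for $S^f_{22}$ and $S^f_{12}$. This is straightforward: $S^f_{jk}$ is continuous at $0$ and coincides with $\wp_{jk}^fS^f$ off the polar set $\tilde B_1^f\cup\tilde B_2^f$. The path $D(t)$ avoids $A_1^f\cup A_2^f$ for small $t\neq0$ when $x_0$ is finite with $f(x_0)\ne0$, so it avoids that polar set. Alternatively, one can avoid paths altogether: by Proposition~\ref{propPropertiesOfP}~\ref{ppiii} the functions $\xi_{12}^f$ and $\xi_{22}^f$ have no pole along $L_f$. So $\wp_{12}^f,\wp_{22}^f$ are locally bounded near $0$ outside $\tilde B^f_j$, and the vanishing follows from $S^f(0)=0$ together with a Riemann extension argument.
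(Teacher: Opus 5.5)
Your proof is correct and is essentially the paper's own argument: the same family $D(t)=(x_0,-u(x_0))+(x_0+t,u(x_0+t))$ gives $S_{12}^f(0)=S_{22}^f(0)=0$ because $\xi_{12}^f,\xi_{22}^f$ stay bounded while $S^f\to0$, and $S_{11}^f(0)\neq0$ follows from the basis in Corollary~\ref{corBasisOfSpaceS} together with Proposition~\ref{propSpaceS}~\ref{pSiii}. You should drop your closing ``alternative'': since $0\in\tilde B_1^f\cup\tilde B_2^f$ (this is just $S^f(0)=0$), the poles of $\wp_{12}^f,\wp_{22}^f$ pass through $0$, so these functions are unbounded on every punctured neighbourhood of $0$, and their limiting values along $L_f$ (e.g.\ $-x_0^2$ for $\wp_{12}^f$) depend on the direction of approach; hence no local-boundedness plus Riemann-extension argument is available, and a specific path such as $D(t)$ is genuinely needed.
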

	\begin{proof}
		The scheme of the proof is quite similar to that of Proposition~\ref{propPropertiesOfP}~\ref{ppii}. Consider arbitrary $x_0 \in \CC$ such that $f(x_0) \ne 0$ and consider any single-valued branch $u$ of $\sqrt{f}$ defined in a neighbourhood of $x_0$. With this choice of the square root branch we define divisor $D(t) = \left(x_0, -u(x_0)\right) + \left(x_0 + t, u(x_0)+t\right) \in \mathcal X_f^{(2)}$ for small $t \ne 0$. By definition $\mathscr A_f(D(t))$ can be represented by the vector
		\[
		A(t) = \begin{pmatrix}
			\displaystyle \int_{x_0}^{x_0 + t}\frac{dx}{u(x)}\\ \displaystyle\int_{x_0}^{x_0 + t}\frac{xdx}{u(x)}
		\end{pmatrix},
		\]
		which converges to $0$ when $t \to 0$. Clearly, $S(A(t)) \to 0$, as $t \to 0$. It remains to note that $\wp_{21}^f(\mathscr A_f(D(t)))$ and $\wp_{22}^f(\mathscr A_f(D(t)))$ converge to $-x_0^2$ and $2x_0$ respectively as $t \to 0$. By taking product we obtain that $S_{12}^f(A(t)), S_{22}^f(A(t))  \to 0$ as $t \to 0$. Thus, $S_{12}^f(0) = S_{22}^f(0) = 0$. Clearly, $S_{11}^f(0)$ cannot be equal to zero, since otherwise there is no function $\phi \in \mathfrak S_f$ such that $\phi(0) \ne 0$ by Corollary~\ref{corBasisOfSpaceS} contradicting Proposition~\ref{propSpaceS}~\ref{pSiii}.
	\end{proof}
	
	\begin{lemma}\label{lemSP12expansion}
		Let $f$ be an admissible polynomial. Then there exists $c \in \CC$ such that the expansion
		\begin{equation}\label{eqSP12expansion}
			S_{12}^f(z) = az_2^2 + \bar{o}(z^2)
		\end{equation}
		holds.
	\end{lemma}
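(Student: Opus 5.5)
The plan is to show that the order $2$ Taylor expansion of $S_{12}^f$ at zero is divisible by $z_2^2$. By Proposition~\ref{propSpaceS}~\ref{pSii} the function $S_{12}^f$ is even, and by Lemma~\ref{lemValuesAtZero} it vanishes at $0$. Hence its order $2$ expansion is a quadratic form $q(z)=q_{11}z_1^2+q_{12}z_1z_2+q_{22}z_2^2$, and we must show $q_{11}=q_{12}=0$. (The constant in~\eqref{eqSP12expansion} is then $q_{22}$; the statement calls it both $c$ and $a$.)

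\textbf{Step 1: curves in the zero set of $S_{12}^f$.} Fix a point $P_0\in\mathcal X_f$ with $x(P_0)=0$ and consider the curves
\[
W=\mathscr A_f\{(P_0)+(Q)\},\qquad W'=\mathscr A_f\{(\mathscr J_f(P_0))+(Q)\},\qquad Q\in\mathcal X_f .
\]
On $\mathcal X_f^{(2)}$ the function $\xi_{12}^f=-x_1x_2$ vanishes along $\{(P_0)+(Q)\}$ and $\{(\mathscr J_f(P_0))+(Q)\}$. Its poles lie only along $A_1^f$ and $A_2^f$, and these are exactly cancelled by the zeros of $S^f$ (Corollary~\ref{corDivisorS}). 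Therefore $S_{12}^f=\wp_{12}^fS^f$ vanishes on the preimages of $W$ and $W'$ in $\CC^2$, away from the blown-down set. Both curves pass through $0$: take $Q=\mathscr J_f(P_0)$, respectively $Q=P_0$, which gives a divisor in $L_f$. Since $W,W'$ are closed analytic curves and $S_{12}^f$ is entire, the vanishing extends to the points of these curves over $0$.

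\textbf{Step 2: tangent directions at $0$.} Near $Q=\mathscr J_f(P_0)$, the curve $W$ is parametrised by $\int_{\mathscr J_f(P_0)}^{Q}(\omega_1^f,\omega_2^f)^T$. Its tangent at $0$ is therefore the value of $(dx/y,\,x\,dx/y)$ at $P_0$. Because $x(P_0)=0$, this vector is proportional to $(1,0)^T$, so $W$ is tangent to $\ker z_2$; the same holds for $W'$.

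If $f(0)\neq 0$, the curves $W\ne W'$ are two distinct branches through $0$ (they are images of distinct curves under a map that is injective off $L_f$), both smooth and tangent to $\ker z_2$. A holomorphic function vanishing on both is divisible near $0$ by $g\,g'$, where $g,g'$ are local defining functions with linear parts proportional to $z_2$. Hence $q$ is a multiple of $z_2^2$.

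If $f(0)=0$, then $P_0$ is a branch point and $W=W'$. In the local parameter $y$ we have $dx=2y\,dy/f'(0)$, so the tangent direction is again $(1,0)^T$. Moreover $x$ vanishes to second order at $P_0$, so $\xi_{12}^f$, and hence $S_{12}^f$, vanishes to order $2$ along $W$. This again gives $z_2^2\mid q$.

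\textbf{Main obstacle.} The delicate step is passing from vanishing on $\mathcal X_f^{(2)}$ to vanishing with the correct multiplicity near $0\in\CC^2$, since $\mathscr A_f$ blows down $L_f$ exactly at this point. I would handle it by working in the local parametrisation $Q\mapsto\mathscr A_f((P_0)+(Q))$, which is an immersion near $Q=\mathscr J_f(P_0)$ with image a smooth germ through $0$. I would then check directly that the two germs (or the single germ with multiplicity $2$) account for the stated divisibility of the leading term.
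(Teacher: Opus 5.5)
Your proposal is correct and follows essentially the paper's proof. Near $0$ the zero set of $S_{12}^f$ contains the Abel images $C_1,C_2$ of the curves through the points with $x=0$; both are smooth and tangent to $\ker z_2$ because $\omega_2^f$ vanishes there, so the product of their local defining functions divides $S_{12}^f$. Your separate treatment of $f(0)=0$, where there is a single curve of multiplicity two because $x$ vanishes to second order at that Weierstrass point, just makes explicit what the paper packs into the divisor identity $(S_{12}^f)=(C_1)+(C_2)$ with $C_1=C_2$.
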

	\begin{proof}
		Let $(P_1) + (P_2) \in \mathcal X_f^{(2)}$ be the divisor of zeros of the function $x$ (that is, $P_1$ and $P_2$ are the points of $\mathcal X_f$ with zero $x$-coordinate; if $f(0) = 0$, then $P_1 = P_2$ is a Weierstrass point). Let $C_j = \{\mathscr A_f[(P_j) + (P)]: P \in \mathcal X_f\} \subset \Jac_f$. Finally, let $\tilde C_j$ be the preimage of $C_j$ with respect to the quotient mapping $\CC^2 \to \Jac_f$. The definition of $\wp_{12}^f$ implies that $\left(S_{12}^f\right) = (C_1) + (C_2)$. Now we note that $\tilde C_j$ is a submanifold in $\CC^2$, $0 \in \tilde C_j$, and $T_0\tilde C_j$ is spanned by the pushforward of a tangent vector to $\mathcal X_f$ at the point $P_j$ with respect to the Abel map. Therefore, $T_0 \tilde C_j$ is spanned by $\begin{pmatrix} 1 & 0 \end{pmatrix}^T$ for $j = 1,2$ (this follows from the fact that $\omega_2^f$ has a zero at $P_j$, while $\omega_1^f$ does not). Consider defining functions $v_1, v_2$ at zero of submanifolds $C_1,C_2$ respectively, i.e. they are defined in a neighbourhood of zero $U \subset \CC^2$ such that $z \in U \cap C_j \Leftrightarrow v_j(z) = 0$ and $dv_j(z) \ne 0$ for all $z \in U$ such that $v_j(z) = 0$. From the description of $T_0\tilde C_j$ follows that $v_j(z) = \alpha_j z_2 + \bar{o}(z)$, where $\alpha_j \ne 0$. Finally, the germ of $v_1v_2$ at zero divides the germ of $S_{12}^f$, for $(S_{12}^f) = (C_1) + (C_2)$. The expansion~\eqref{eqSP12expansion} follows.
	\end{proof}
	\begin{remark}
		Along the lines of the proof of Lemma~\ref{lemSP12expansion} it is easy to derive a formula for $S_{12}^f(z)$ in terms of theta functions. The answer is the same as in the formula~\eqref{eqSFormula}, but $\Delta$ should be replaced by the Riemann constant vector corresponding to one of the points $P_j$ instead of points at infinity.
	\end{remark}
	\begin{corollary}\label{corPartialExpansions}
		Let $f$ be an admissible polynomial. Then there exists $c \in \CC$ such that the expansions
		\begin{equation}\label{eqSP12and22expansion}
			\begin{split}
				S_{22}^f(z) & =  cz_1z_2 + \bar{o}(z^2) \\
				S_{12}^f(z) & =  -\frac{c^2}{4}z_2^2 + \bar{o}(z^2)
			\end{split}
		\end{equation}
		hold.
	\end{corollary}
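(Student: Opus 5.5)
The plan is to use the relation $\xi_{22}^2 + 4\xi_{12} = (x_1-x_2)^2$ on the level of weight $2$ functions. In terms of canonical Kleinian functions of weight $2$ this reads
\[
\left(S_{22}^f\right)^2 + 4S^fS_{12}^f = \left(\left(\wp_{22}^f\right)^2 + 4\wp_{12}^f\right)\left(S^f\right)^2 .
\]
First I determine the shape of $S_{22}^f$. The function $S_{22}^f$ vanishes at $0$ by Lemma~\ref{lemValuesAtZero}, and it is even by Proposition~\ref{propSpaceS}~\ref{pSii}. Hence its order $2$ expansion is a quadratic form $q(z)$.

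Next I look at the zero set of $S_{22}^f$. This is the analogue of the argument in Lemma~\ref{lemSP12expansion}. The divisor of $\xi_{22}^f = x_1 + x_2$, restricted away from the poles, consists of the divisors $(P) + (\mathscr J_f\text{-free partner})$ with $x_1 = -x_2$. Near $0$ in $\Jac_f$ this corresponds to pairs $(x_0,y)+( -x_0, \cdot)$ approaching a special divisor only when $x_0 = 0$. In other words, near the origin the zero set of $S_{22}^f$ is the image of the curve $\{(x,y)+(-x,y')\}$ passing through points $(P_1)+(P_2)$ of the class $\mathfrak L_f$.

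Rather than analyse this directly, I would use a cleaner route. By Lemma~\ref{lemSP12expansion}, $S_{12}^f = a z_2^2 + \bar o(z^2)$. Since $S^f = z_1^2 + \bar o(z^2)$, the product $4S^fS_{12}^f$ vanishes to order $4$ with leading term $4a z_1^2z_2^2$.

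On the other side, $((\wp_{22}^f)^2 + 4\wp_{12}^f)(S^f)^2$ equals $(x_1-x_2)^2 (S^f)^2$ under $\mathscr A_f$. I claim this entire function, call it $G$, vanishes to order $\ge 5$ at $0$, i.e. $G = \bar o(z^4)$. If that holds, comparing degree $4$ terms gives $q(z)^2 = -4a z_1^2 z_2^2$. Unique factorisation of polynomials then forces $q = c z_1 z_2$ with $c^2 = -4a$, which is exactly~\eqref{eqSP12and22expansion}. If $a = 0$ this also gives $q = 0$, consistent with $c = 0$.

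The main obstacle is showing $G = \bar o(z^4)$. I would do this along the curves $A(t)$ from the proof of Lemma~\ref{lemValuesAtZero}, and more generally along all generic two-parameter families. In the family there, the divisor is $D(t) = (x_0,-u(x_0)) + (x_0+t, u(x_0+t))$, so $x_1 - x_2 = t$. Along it, $A(t) = \bigl(t/u(x_0), x_0t/u(x_0)\bigr)^T + O(t^2)$. As $x_0$ varies, these directions sweep an open set of directions $(1, x_0)$, after rescaling.

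Along this family $S^f(A(t)) \sim t^2/u(x_0)^2$, so $G(A(t)) \sim t^2 \cdot t^4 / u^4 = O(t^6)$. This holds while $|z| \asymp |t|$. Hence the homogeneous parts of $G$ of degrees $\le 5$ vanish on an open cone of directions, and therefore vanish identically. In particular $G = \bar o(z^4)$.

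The delicate point is uniformity. I need $S^f(A(t)) = O(t^2)$, which follows from $S^f = z_1^2 + \bar o(z^2)$. I also need that the rational identity extends holomorphically, which holds because both sides are entire and agree generically. With these, the comparison of Taylor coefficients along lines through the origin gives the claim.
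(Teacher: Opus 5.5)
Your argument is correct, but it establishes the key fact (that the quartic Taylor part of $G=(S_{22}^f)^2+4S^fS_{12}^f$ vanishes) by a different route from the paper.

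\emph{The paper's route.} It multiplies the Kummer quartic relation of Proposition~\ref{propQuarticRelation} by $(S^f)^4$. Because $S^f,S_{12}^f,S_{22}^f$ vanish to order $2$, the right-hand side $-K_3S_{11}^f-K_4$ vanishes to order $\ge 6$. It then divides by $(S_{11}^f)^2$, using $S_{11}^f(0)\neq 0$. That nonvanishing rests on Proposition~\ref{propSpaceS}~\ref{pSiii}, i.e.\ on Mumford's result that weight-$2$ theta functions have no common zero.

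\emph{Your route.} You use only the elementary identity $\xi_{22}^2+4\xi_{12}=(x_1-x_2)^2$. Along the family $D(t)$ this gives exactly $G(A(t))=t^2S^f(A(t))^2=O(t^6)$, and you then let the tangent direction $[1:x_0]$ of $A(t)$ vary. This avoids both the cited Cassels--Flynn relation (with its unspecified $K_3,K_4$) and the nonvanishing of $S_{11}^f(0)$. The cost is a small direction-sweeping argument; the paper's version is shorter once the quartic relation is granted and needs no limiting curves.

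\emph{One step to phrase more carefully.} The curves $A(t)$ are not lines through the origin, so ``comparison of Taylor coefficients along lines'' is not literally what happens. What you need, and what is true, is this. Write $G=G_4+O(|z|^5)$ and $A(t)=t\,v(x_0)+O(t^2)$ with $v(x_0)\neq 0$. Then $G(A(t))=t^4G_4(v(x_0))+O(t^5)$. So $G_4$ vanishes in infinitely many directions, and hence is identically zero as a binary quartic form.

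The exploratory paragraph about the zero set of $S_{22}^f$ is not used and can be dropped.
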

	\begin{proof}
		Consider polynomials $K_3$ and $K_4$ from Proposition~\ref{propQuarticRelation}. Since the relation~\eqref{eqQuartic} is homogeneous of order $4$ we can multiply it by $\left(S^f\right)^4$ and obtain the relation
		\begin{equation}\label{eqRelationForS}
			\left(\left(S_{22}^f\right)^2 + 4S^fS_{12}^f\right)\left(S_{11}^f\right)^2 = - K_3\left(S^f, S_{12}^f, S_{22}^f\right)S_{11}^f - K_4\left(S^f, S_{12}^f, S_{22}^f\right).
		\end{equation}
		Lemma~\ref{lemValuesAtZero} implies that the right-hand side in~\eqref{eqRelationForS} has zero of order at least $6$ at $z = 0$ (note that all functions are even). Therefore, the function \[
		\left(S_{22}^f\right)^2 + 4S^fS_{12}^f
		\] 
		also has zero of order at least $6$ at $z = 0$.  From the expansions
		\[
		\begin{gathered}
			S^f(z) = z_1^2 + \bar{o}(z^2),\;\;S_{12}^f(z) = az_2^2 + \bar{o}(z^2),\\S_{22}^f(z) = p_{11}z_1^2 + p_{12}z_1z_2 + p_{22}z_2^2 + \bar{o}(z^2),
		\end{gathered}
		\]
		it follows that 
		\[
		\left(S_{22}^f\right)^2 + 4S^fS_{12}^f = (p_{11}z_1^2 + p_{12}z_1z_2 + p_{22}z_2^2)^2 + 4az_1^2z_2^2 + \bar{o}(z^4).
		\]
		Since the left-hand side vanishes at zero with order $6$ we get that
		\[
		(p_{11}z_1^2 + p_{12}z_1z_2 + p_{22}z_2^2)^2 + 4az_1^2z_2^2 = 0.
		\]
		Therefore, $p_{11} = p_{22} = 0$ and $p_{12}^2 + 4a = 0$. Thus,~\eqref{eqSP12and22expansion} holds with $c = p_{12}$.
	\end{proof}
	
	To proceed further we need to differentiate Kleinian functions of weight 2. The method we present below is derived from~\cite{baker}, where the calculations are carried out for polynomials in Weierstrass form. At first we introduce the meromorphic functions $\rho^f_1$ and $\rho^f_2$ on $\CC^2$ defined as the integrals of the $1$-forms $r_1^f$ and $r_2^f$ respectively. That is, if $z \in \CC^2$ coincides with $\mathscr A_f(D)$ modulo $\Per_f$, where $D = (x_1,y_1) + (x_2,y_2)$ is non-special divisor, then there exists a curve $\gamma$ on $\mathcal X_f$ that connects $(x_1, -y_1)$ to $(x_2, y_2)$ and \[
	z = \int_\gamma \begin{pmatrix}
		\omega_1^f \\ \omega_2^f
	\end{pmatrix}.
	\]
	Clearly, $\gamma$ is defined up to adding cycles that are homologous to zero, thus, if both points $(x_1,y_1)$ and $(x_2,y_2)$ are not at infinity, then the integrals $\rho^f_j(z) = \int_\gamma r_1^f$, $j = 1,2$ are well-defined and do not depend on the choice of $\gamma$. Clearly, the foregoing construction defines $\rho_j^f$ on the set $\CC^2 \setminus (\tilde B^f_1 \cup \tilde B^f_2)$, where submanifolds $\tilde B^f_j \subset \CC^2$ were defined in Corollary~\ref{corDivisorS}.
	
	\begin{lemma}\label{lemRhoProperties}
		Let $f$ be an admissible polynomial. Then the following statements hold.
		\begin{enumerate}[label=(\roman*)]
			\item\label{Rhoi} Functions $\rho_1^f$ and $\rho_2^f$ are odd meromorphic functions on $\CC^2$.
			\item\label{Rhoii} Let $\lambda^f$ denote the $\Per_f$-periodic meromorphic function on $\CC^2$ defined by the equation 
			\[
			\lambda^f(D) = \frac{y_1 - y_2}{x_1 - x_2}
			\]
			for $D = (x_1,y_1) + (x_2,y_2)$. Then $\lambda^f$ is odd and the functions
			\[
			\left(\rho_1^f - \frac{\lambda^f}{2}\right) S^f,\;\;\rho_2^fS^f
			\]
			are entire analytic functions on $\CC^2$.
			\item\label{Rhoiii} For all $w \in \Per_f$ the relations
			\[
			\rho_1^f(z + w) = -\eta^f_1(w) + \rho_1(z),\;\;\rho_2^f(z + w) = -\eta^f_2(w) + \rho_2^f(z)
			\]
			hold, where $\eta^f(w) = \begin{pmatrix}
				\eta^f_1(w) & \eta^f_2 (w)
			\end{pmatrix}^T$.
		\end{enumerate}
	\end{lemma}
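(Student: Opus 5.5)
We prove the three statements in the order \ref{Rhoiii}, \ref{Rhoi}, \ref{Rhoii}, since \ref{Rhoiii} and the oddness follow almost formally from the construction, while the entire-ness in \ref{Rhoii} requires a local analysis near the points at infinity.

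\textbf{Quasi-periodicity.} Take $z=\mathscr A_f(D)$ with $D$ non-special and away from $\tilde B^f_1\cup\tilde B^f_2$, and let $\gamma$ be the path from $(x_1,-y_1)$ to $(x_2,y_2)$ used to define $\rho^f_j(z)$. For $w\in\Per_f$ represented by a cycle $\delta$, the path $\gamma+\delta$ joins the same endpoints and realizes $z+w$. Hence
\[
\rho^f_j(z+w)=\int_{\gamma}r^f_j+\int_\delta r^f_j=\rho^f_j(z)-\eta^f_j(w)
\]
by~\eqref{eqEtaDef}. The same argument shows that $\rho^f_j$ is well defined: two admissible paths differ by a cycle whose $\omega$-periods vanish, so the cycle is null-homologous and the $r^f_j$-integral over it is zero. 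Any non-uniqueness in the choice of the divisor $D$ occurs only for special $D$, that is, over $z\in\Per_f$.

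\textbf{Oddness and meromorphy.} Replacing $D$ by $\mathscr J_f(D)$ sends $z\mapsto -z$ by~\eqref{eqHyperellipticInvolutionAbel}. The path $\mathscr J_f(\gamma)$ runs from $(x_1,y_1)$ to $(x_2,-y_2)$, and reversing it gives a path from $(x_2,y_2)$ to $(x_1,-y_1)$ realizing $-z$. Since $\mathscr J_f^*r^f_j=-r^f_j$, this yields $\rho^f_j(-z)=-\rho^f_j(z)$. The function $\lambda^f$ is odd by the same involution argument, because both $y_1-y_2$ changes sign and $x_1-x_2$ is unchanged.

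Holomorphy of $\rho^f_j$ off $\tilde B^f_1\cup\tilde B^f_2\cup\Per_f$ comes from the local biholomorphism $\mathscr A_f$ on non-special divisors together with holomorphic dependence of the integral on its endpoints. Extension across the thin set $\Per_f$ is by Hartogs' theorem, since this set has codimension two. Meromorphy across $\tilde B^f_j$ follows from the local analysis in the next step: the poles there are at worst of the order of $1/S^f$.

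\textbf{Entire-ness in \ref{Rhoii}.} This is the main obstacle. By Corollary~\ref{corDivisorS}, $S^f$ vanishes to order exactly one along $\tilde B^f_1$ and $\tilde B^f_2$, so it suffices to show that $\rho^f_1-\lambda^f/2$ and $\rho^f_2$ have at most simple poles there.

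Near $\infty_1$ take a local parameter $t=1/x$. For $\deg f=6$ we have $y\sim\pm\sqrt{f_6}\,x^3$, while for $\deg f=5$ one uses $t^2=1/x$ instead. In the first case
\[
r^f_2=\frac{f_5x^2+2f_6x^3}{4y}\,dx
\]
has a pole of order exactly $2$ and no residue, because it is a second-kind differential. Consequently $\int r^f_2$ has a simple pole in the endpoint, which is a simple pole along $\tilde B^f_j$. For $r^f_1$ the term $4f_6x^4\,dx/(4y)$ gives a pole of order $3$. Its integral therefore has a double pole, and we must check that $\lambda^f/2$ cancels the leading part. When $x_2\to\infty$ with $(x_1,y_1)$ fixed, $\lambda^f\approx y_2/x_2\sim\sqrt{f_6}\,x_2^2+\tfrac{f_5}{4\sqrt{f_6}}x_2+\dots$, while $\int^{x_2}r^f_1\approx \tfrac{\sqrt{f_6}}{2}x_2^2+\dots$. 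I expect the $x_2^2$ terms to cancel exactly, leaving a simple pole. The orientation convention (the path ends at $(x_2,y_2)$) supplies the correct sign for both branches.

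The same computation in the parameter $t$ with $x=t^{-2}$ handles $\deg f=5$. Moreover, the endpoint $(x_1,-y_1)$ approaching $\infty$ is treated symmetrically, using the oddness established above.
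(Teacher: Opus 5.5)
Your proposal is essentially the paper's proof. Parts (iii) and the oddness come from the same path and involution arguments. Entire-ness comes from a local analysis at a generic point of $\tilde B^f_j$: there the primitive of $r^f_2$ has a simple pole, and the double pole of the primitive of $r^f_1$ is cancelled by $\lambda^f/2\approx y_2/(2x_2)$. This is the paper's $g/2$ with $g=y/x$; the paper makes the cancellation exact via the identity $2r^f_1-dg=\frac{2f_0+f_1x}{2x^2y}\,dx$, which is regular at infinity and so also settles $\deg f=5$ without a separate expansion.

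Two small repairs are needed:
\begin{enumerate}
\item Hartogs' theorem should be applied to the products $\rho^f_2S^f$ and $(\rho^f_1-\lambda^f/2)S^f$ across the discrete set of non-generic points of $\tilde B^f_1\cup\tilde B^f_2$, not to $\rho^f_j$ itself, whose polar set passes through $\Per_f\subset\tilde B^f_1\cap\tilde B^f_2$. That discrete set contains $\Per_f$ and, for $\deg f=6$, the points over $\mathscr A_f(2\infty_j)$, which your ``$x_2\to\infty$ with $(x_1,y_1)$ fixed'' analysis does not reach.
\item For $\deg f=5$ one has $\tilde B^f_1=\tilde B^f_2$, so $S^f$ vanishes to order two there, not one. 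That is precisely why cancelling only the leading $x_2^{3/2}$ term of the third-order pole of $\int r^f_1$ suffices.
\end{enumerate}
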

	\begin{proof}
		The statement~\ref{Rhoiii} is obvious. Indeed, assume that \[
		z = \int_\gamma \begin{pmatrix}
			\omega_1^f \\ \omega_2^f
		\end{pmatrix},
		\]
		where $\gamma$ is a curve on $\mathcal X_f$ that connects $(x_1, -y_1)$ to $(x_2, y_2)$. Then $z + w$ is represented in the same way by the curve $\tilde \gamma$, which is obtained from $\gamma$ by adding a cycle that represents the period $w$. The corresponding integral of $r_k^f$ is by definition (i.e. by formula~\eqref{eqEtaDef}) equal to $-\eta_k^f(w)$, hence~\ref{Rhoiii} holds. To prove that $\rho_1^f$, $\rho_2^f$, and $\lambda^f$ are odd in view of~\eqref{eqHyperellipticInvolutionAbel} it suffices to apply the hyperelliptic involution and verify the sign change. Finally, we note that the statement~\ref{Rhoii} implies that $\rho_1^f$ and $\rho_2^f$ are meromorphic. Thus, it remains to prove~\ref{Rhoii}. For that we need some preparation.
		
		Let $\infty_1,\infty_2$ denote the points at infinity of the curve $\mathcal X_f$. As in Corollary~\ref{corDivisorS} let $B^f_j \subset \Jac_f$ be the set $B^f_j = \{\mathscr A_f[(P) + (\infty_j)]: P \in \mathcal X_f\}$ and let $\tilde B^f_j \subset \CC^2$ be the preimage of $B^f_j$ with respect to the quotient mapping $\CC^2 \to \Jac_f$. Now fix $j \in \{1,2\}$ and consider $z \in \tilde B_j^f$ such that $z = \mathscr A_f(D)\;\mathrm{mod}\;\Per_f$, where $D = (x,y) + (\infty_j)$ and $(x,y)$ is not at infinity. Clearly, a generic $z \in \tilde B_j^f$ can be represented in such way. Consider neighbourhoods $U, V \subset \mathcal X_f$ of points $(x,y)$ and $\infty_j$ respectively, and meromorphic functions $u_1,u_2$ on $U$ and $v_1,v_2$ on $V$ respectively such that $du_k = r_k^f|_U$ and $dv_k = r_k^f|_V$ for $k = 1,2$ (such neighbourhoods and functions exist because $r_k^f$ is of the second kind). Moreover, we can shrink $U$ and $V$ if needed to achieve that $U \cap V = \emptyset$, $U$ and $V$ are simply connected, and that for all $P \in U$ and $Q \in V$ the divisor $(P) + (Q)$ is non-special. In this case we have a biholomorphic mapping $A:U \times V \to W$, where $W \subset \CC^2$ is a neighbourhood of $z$ such that $A((x,y), \infty_j) = z$ and $A(P,Q)$ is a representative of $\mathscr A_f[(P) + (Q)]$ for all $P \in U$, $Q \in V$. Moreover, there exists a constant $c \in \CC$ such that for all $w \in W \setminus \tilde B_1^f$ we have $\rho_k^f(w) = u_k(P) + v_k(Q) + c$, where $A(P,Q) = w$. 
		
		With this preparation we now separately consider cases $k = 1$ and $k = 2$. At first note that $r_2^f$ has a pole of order $2$ at points $\infty_j$, $j = 1,2$ (if $\deg f = 5$, then $r_2^f$ has a single double pole at $\infty_1 = \infty_2$). This fact is evident from the definition~\eqref{eqRDef} of $r_2^f$. Thus, $v_2$ has a pole of order $1$ at $\infty_1$ and therefore, the function $\rho_2^f$ in $W$ has a pole of order $1$ along the submanifold $W \cap \tilde B_j^f$. Since there are no other poles of $v_2$ and $u_2$, we conclude that $\rho_2^fS^f$ is holomorphic in $W$. Thus, we have proved that $\rho_2^fS^f$ is holomorphic in a neighborhood of a generic $z \in \tilde{B}_j^f$, $j = 1,2$. Thus, $\rho_2^fS^f$ is holomorphic outside a discrete set, so by the Riemann removable singularity theorem (see, e.g.~\cite[Theorem~7.1.2]{Grauert}) we conclude that $\rho_2^f S^f$ is holomorphic everywhere. Thus, we have proved~\ref{Rhoii} for the function $\rho_2^f$.
		
		Now we consider the function $\rho_1^f$. Since the order of the pole of $r_1^f$ at infinity is $3$ we need to modify the previous argument. Consider a meromorphic function $g$ on $\mathcal X_f$ defined as $(x,y) \mapsto y/x$. By a straightforward calculation we get
		\[
		2r_1^f - dg = \frac{2f_0 + f_1x}{2x^2y}dx,            
		\]
		so $2r^f_1 - dg$ is holomorphic at infinity. Thus, the function $2v_1 - g$ is also holomorphic at infinity. Now let $l(P,Q) = (y_1 - y_2)/(x_1 - x_2)$, where $P = (x_1,y_1)$ and $Q = (x_2,y_2)$. A simple calculation shows that for any $P$ that is not at infinity the function $Q \mapsto l(P,Q) - g(Q)$ has a pole of order at most $1$ at infinity. Therefore, the function $(P,Q) \mapsto u_1(P) + v_2(Q) - l(P,Q)/2$ has a pole of order $1$ along the submanifold $\{Q = \infty_j\}$. Thus, $(\rho_1^f - \lambda^f/2)S^f$ is holomorphic in a neighbourhood of $z$. The rest of the proof is the same as for the function $\rho_2^f$.
	\end{proof}
	\begin{proposition}\label{propDiffS}
		Let $f$ be an admissible polynomial. Then the equalities
		\begin{equation}\label{eq1DiffS}
			\frac{\partial \ln S^f}{\partial z_1} = -2\rho_1^f + \lambda^f,\;\;\frac{\partial \ln S^f}{\partial z_2} = -2\rho_2^f
		\end{equation}
		\begin{equation}\label{eq2DiffS}
			\begin{gathered}
				\frac{\partial^2 \ln S^f}{\partial z_1^2} = -2\wp^f_{11} - f_6 \left(\wp_{12}^f\right)^2,\;\;\frac{\partial^2 \ln S^f}{\partial z_1 \partial z_2} = - \frac{f_5}{2}\wp_{12}^f - f_6\wp_{12}^f\wp_{22}^f,\\ \frac{\partial^2 \ln S^f}{\partial z_2^2} = - \frac{f_5}{2}\wp_{22}^f - f_6\left(\left(\wp_{22}^f\right)^2 + \wp_{12}^f\right)
			\end{gathered}
		\end{equation}
		hold.
	\end{proposition}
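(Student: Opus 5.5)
The natural route to~\eqref{eq1DiffS} is to show that the two entire functions $\partial_{z_j}S^f + 2(\rho_j^f - \delta_{j1}\lambda^f/2)S^f$, $j=1,2$, vanish identically. By Lemma~\ref{lemRhoProperties}~\ref{Rhoii} both are entire. Write $\psi_1 = \rho_1^f - \lambda^f/2$ and $\psi_2 = \rho_2^f$. Since $\lambda^f$ is $\Per_f$-periodic, Lemma~\ref{lemRhoProperties}~\ref{Rhoiii} gives $\psi_j(z+w) = \psi_j(z) - \eta^f_j(w)$. Differentiating~\eqref{eqSpaceDef} gives
\[
\partial_j S^f(z+w) = e^{2\eta(w)^T(z+w/2)}\bigl(\partial_jS^f(z) + 2\eta_j^f(w)S^f(z)\bigr).
\]
Hence $h_j = \partial_jS^f + 2\psi_jS^f$ transforms exactly like elements of $\mathfrak S_f$, so $h_j \in \mathfrak S_f$. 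Each $h_j$ is odd, being the derivative of an even function plus an odd function times an even one. By Proposition~\ref{propSpaceS}~\ref{pSii} every element of $\mathfrak S_f$ is even, so $h_j = 0$. Dividing by $S^f$ gives~\eqref{eq1DiffS}.

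For~\eqref{eq2DiffS} I would differentiate~\eqref{eq1DiffS} once more. This reduces the claim to computing $\partial_k\rho_j^f$ and $\partial_k\lambda^f$ as functions of the divisor $D=(x_1,y_1)+(x_2,y_2)$. Take $(x_1,y_1),(x_2,y_2)$ as local coordinates; by the construction of $\rho_j^f$ the first point enters with the involution, via a path from $(x_1,-y_1)$. Then $dz_1 = dx_1/y_1 + dx_2/y_2$ and $dz_2 = x_1dx_1/y_1 + x_2dx_2/y_2$, and the chain rule expresses $\partial/\partial z_k$ through $\partial/\partial x_1, \partial/\partial x_2$ by inverting the matrix with determinant $(x_2-x_1)/(y_1y_2)$. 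Likewise $d\rho_j^f = r_j^f(P_1)+r_j^f(P_2)$, with the same sign conventions. The result is
\[
\partial_k \rho_j^f = \frac{\text{explicit rational expression in } x_1,x_2,y_1,y_2}{x_1-x_2},
\]
and $\partial_k\lambda^f$ is computed in the same way using $2y\,dy = f'(x)dx$.

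It remains to show that $-2\partial_k\rho_j^f + \delta_{j1}\partial_k\lambda^f$ coincides with the stated combinations of $\xi^f_{jk}$. I would do this by clearing denominators. One then uses $y_i^2 = f(x_i)$ and recognizes the numerator $F_f(x_1,x_2)-2y_1y_2$ of $\xi_{11}^f$. The symmetric terms reduce to polynomials in $x_1+x_2$ and $x_1x_2$, which produce the $f_5$ and $f_6$ corrections. As a consistency check, the mixed partial must be symmetric, so $\partial_2(-2\rho_1^f+\lambda^f)$ must equal $\partial_1(-2\rho_2^f)$.

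The main obstacle is this algebraic verification, especially for $\partial^2_{z_1}$, where $\lambda^f$ and the cubic-order pole of $r_1^f$ interact. I expect the quantity $2y_1y_2$ to appear only through the combination defining $\xi_{11}^f$ after symmetrization. The computation should first be checked in the case $f_6=0$, where it reduces to Baker's formulas, and then redone with the $f_6$ terms tracked.
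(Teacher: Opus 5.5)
Your proposal is correct and follows the paper's proof: your $h_j$ are exactly the paper's $aS^f$ and $bS^f$, shown to be odd elements of $\mathfrak S_f$ and hence zero, and for~\eqref{eq2DiffS} the paper likewise only sketches the chain-rule computation in the local coordinates $x_1,x_2$ with $d\rho_j^f = r_j^f(P_1)+r_j^f(P_2)$. The algebra you anticipate does close: each $\partial_{z_k}\rho_j^f$ is a symmetric polynomial in $x_1,x_2$, and the only $y_1y_2$ term, $y_1y_2/(x_1-x_2)^2$, comes from $\partial_{z_1}\lambda^f$ and matches the one in $-2\xi_{11}^f$.
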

	Note that equalities~\eqref{eqLogDerP} can be obtained by simplifying~\eqref{eq2DiffS} and using Proposition~\ref{propSigmaProperties}~\ref{pSigmai} when $f$ is in Weierstrass form.
	\begin{proof}
		Consider the functions 
		\[
		a(z) = \frac{\partial \ln S^f}{\partial z_1} + 2\rho_1^f - \lambda^f,\;\;b(z) = \frac{\partial \ln S^f}{\partial z_2} + 2\rho_2^f.
		\]
		From Lemma~\ref{lemRhoProperties}~\ref{Rhoiii} and the definition~\eqref{eqSpaceDef} of the space $\mathfrak S_f$ it is easy to see that $a$ and $b$ are $\Per_f$-periodic. Moreover, $a$ and $b$ are odd, in view of Lemma~\ref{lemRhoProperties}~\ref{Rhoi} and the fact that $S^f$ is even. Moreover, $aS^f$ and $bS^f$ are entire holomorphic functions due to Lemma~\ref{lemRhoProperties}~\ref{Rhoii}. Thus, $aS^f$ and $bS^f$ belong to $\mathfrak S_f$. From Proposition~\ref{propSpaceS}~\ref{pSii} it now follows that $aS^f = bS^f = 0$, as there are no non-trivial odd elements in $\mathfrak S_f$. The equations~\eqref{eq1DiffS} follow, since they can be written as equalities $a = b = 0$.
		
		The equations~\eqref{eq2DiffS} follow from~\eqref{eq1DiffS} by a straightforward calculation proposed in~\cite{baker}); we only sketch the proof. For a non-special divisor $D = (x_1,y_1) + (x_2,y_2)$ the map $\mathscr A_f$ is biholomorphic in a suitable neighbourhood of $D$. Moreover, if $(x_1,y_1) \ne (x_2,y_2)$ and neither of the points in $D$ is a Weierstrass point or a point at infinity, then $(x_1,x_2)$ constitute local coordinates on $\mathcal X_f^{(2)}$ near $D$ and it is easy to calculate the Jacobi matrix of $\mathscr A_f$. Namely,
		\[
		\frac{\partial \mathscr A_f}{\partial x_1} = \begin{pmatrix}
			1/y_1 \\ x_1/y_1
		\end{pmatrix},\text{ and }\frac{\partial \mathscr A_f}{\partial x_1} = \begin{pmatrix}
			1/y_2 \\ x_2/y_2
		\end{pmatrix}.
		\]
		If we let $z = \begin{pmatrix} z_1 & z_2 \end{pmatrix}^T = \mathscr A_f(D)$, then we can calculate the Jacobi matrix of the inverse map $z \mapsto (x_1(z),x_2(z))$ by taking the inverse matrix. A simple calculation yields
		\[
		\frac{\partial x_1}{\partial z_1} = \frac{y_1x_2}{x_2 - x_1},\;\;\frac{\partial x_2}{\partial z_1} = \frac{y_2x_1}{x_1 - x_2},\;\; \frac{\partial x_1}{\partial z_2} = \frac{y_1}{x_1 - x_2},\;\;\frac{\partial x_2}{\partial z_1} = \frac{y_2}{x_2 - x_1}.
		\]
		Since this calculation is valid at a generic $z \in \CC^2$ we can calculate the derivatives of a function $\phi$ by the formulas
		\[
		\begin{gathered}
			\frac{\partial \phi}{\partial z_1} = \frac{\partial \phi}{\partial x_1}\frac{\partial x_1}{\partial z_1} + \frac{\partial \phi}{\partial x_2}\frac{\partial x_2}{\partial z_1} = \frac{\partial \phi}{\partial x_1}\frac{y_1x_2}{x_2 - x_1} + \frac{\partial \phi}{\partial x_2}\frac{y_2x_1}{x_1 - x_2}, \\ \frac{\partial \phi}{\partial z_1} = \frac{\partial \phi}{\partial x_1}\frac{\partial x_1}{\partial z_1} + \frac{\partial \phi}{\partial x_2}\frac{\partial x_2}{\partial z_1} = \frac{\partial \phi}{\partial x_1}\frac{y_1}{x_1 - x_2} + \frac{\partial \phi}{\partial x_2}\frac{y_2}{x_2 - x_1}
		\end{gathered}
		\]
		Applying this to~\eqref{eq1DiffS} yields~\eqref{eq2DiffS}.
	\end{proof}
	\begin{proof}[Proof of Theorem~\ref{thTaylorExpansions}]
		At first we calculate the value 
		\[
		\frac{\partial^2 S_{22}^f}{\partial z_1 \partial z_2}(0).
		\]
		At first write
		\begin{equation}\label{eqSecDer}
			\frac{\partial^2 S_{22}^f}{\partial z_1 \partial z_2} = \frac{\partial^2 S^f}{\partial z_1 \partial z_2}\wp_{22}^f + \frac{\partial S^f}{\partial z_1}\frac{\partial \wp_{22}^f}{\partial z_2} + \frac{\partial S^f}{\partial z_2}\frac{\partial \wp_{22}^f}{\partial z_1} + S^f\frac{\partial^2 \wp_{22}^f}{\partial z_1 \partial z_2}.
		\end{equation}
		We calculate the limit of each term when $z \to 0$. To do so
		consider arbitrary $x_0 \in \CC$ that is not a root of $f$ and let $u$ be any branch of $\sqrt{f}$ defined in a neighbourhood of $x$. Finally, for small $t \ne 0$ consider divisor $D(t) = (x_0, -u(x_0)) + (x_0 + t, u(x_0 + t)) \in \mathcal X_f^{(2)}$. By definition $\mathscr A_f(D(t))$ can be represented by a vector
		\[
		A(t) = \begin{pmatrix}
			\displaystyle\int_{x_0}^{x_0+t} \frac{dx}{u(x)} \\
			\displaystyle\int_{x_0}^{x_0+t} \frac{xdx}{u(x)}
		\end{pmatrix}.
		\]
		Therefore, as $t \to 0$ we have the expansions
		\[
		\begin{gathered}
			A_1(t) = \frac{t}{u(x_0)} + \bar{o}(t),\;\; A_2(t) = \frac{x_0 t}{u(x_0)} + \bar{o}(t),\\S^f(A(t)) = \frac{t^2}{f(x_0)} + \bar{o}(t^2),\;\;\wp_{22}^f(A(t)) = 2x_0 + \bar{o}(1)\\\frac{\partial S^f}{\partial z_1}(A(t)) = \frac{2t}{u(x_0)} + \bar{o}(t^2),\;\;\frac{\partial S^f}{\partial z_2}(A(t)) = \bar{o}(t^2),\;\;\frac{\partial^2 S^f}{\partial z_1\partial z_2}(A(t)) = \bar{o}(t).
		\end{gathered}
		\]
		To calculate the derivatives of $\wp_{22}^f$ it is easy to apply the method described in the proof of Proposition~\ref{propDiffS}. It provides the formulas
		\[
		\begin{gathered}
			\frac{\partial \wp_{22}^f}{\partial z_1}(z) = \frac{y_1x_2 - y_2x_1}{x_2 - x_1},\;\;\frac{\partial \wp_{22}^f}{\partial z_2}(z) = \frac{y_2 - y_1}{x_2 - x_1},\\
			\frac{\partial^2 \wp_{22}^f}{\partial z_1 \partial z_2}(z) = -\frac{f'(x_1)x_2 + f'(x_2)x_1}{2(x_2 - x_1)^2} + \frac{y_1y_2}{(x_2 - x_1)^2} + \frac{f(x_2)x_1 - f(x_1)x_2}{(x_2 - x_1)^3},
		\end{gathered}
		\]
		where $z = \mathscr A_f(D)$ and $D = (x_1,y_1) + (x_2,y_2)$. By substituting $D(t)$ into these formulas we obtain the expansions
		\[
		\begin{gathered}
			\frac{\partial \wp_{22}^f}{\partial z_1}(A(t)) = \frac{1}{t}(-2u(x_0)x_0 + \bar{o}(1)),\;\;\frac{\partial \wp_{22}^f}{\partial z_2}(A(t)) = \frac{1}{t}(2u(x_0) + \bar{o}(1)),\\
			\frac{\partial^2 \wp_{22}^f}{\partial z_1 \partial z_2}(A(t)) = \frac{1}{t^2}(-2f(x_0) + \bar{o}(1)).
		\end{gathered}
		\]
		Combining all the expansions for $S^f$, $\wp_{22}^f$ and their derivatives and substituting into~\eqref{eqSecDer} we get that
		\[
		\frac{\partial^2 S_{22}^f}{\partial z_1 \partial z_2}(A(t)) \to 2,\text{ when }t \to 0.
		\]
		Thus, from Corollary~\ref{corPartialExpansions} it follows that
		\[
		S_{22}^f(z) = 2z_1z_2 + \bar{o}(z^2),\;\;S_{12}^f(z) = -z_2^2 + \bar{o}(z^2).
		\]
		It remains to find the order $2$ Taylor expansion of $S_{11}^f$. The idea is to consider the expansion
		\[
		S^f(z) = z_1^2 + az_1^4 + bz_1^3z_2 + cz_1^2z_2^2 + dz_1z_2^3 + ez_2^4 + \bar{o}(z^4).
		\]
		Substituting it into the equations
		\[
		\begin{gathered}
			S^f\frac{\partial^2 S^f}{\partial z_2^2} - \left(\frac{\partial S^f}{\partial z_2}\right)^2 = -\frac{f_5}{2}S^fS^f_{22}  - f_6\left(\left(S_{22}^f\right)^2 + S^fS_{12}^f\right), \\
			S^f\frac{\partial^2 S^f}{\partial z_1 \partial z_2} - \frac{\partial S^f}{\partial z_1}\frac{\partial S^f}{\partial z_2} = -\frac{f_5}{2}S^fS^f_{12}  - f_6S_{12}^fS_{22}^f,
		\end{gathered}
		\]
		which follow from~\eqref{eq2DiffS}, yields $b = c = 0$, $d = -f_5/6$ and $e = -f_6/4$. Thus,
		\[
		S^f(z) = z_1^2 + az_1^4 - \frac{f_5}{6}z_1z_2^3 - \frac{f_6}{4} ez_2^4 + \bar{o}(z^4)
		\]
		with some constant $a$. Finally, consider the expansion
		\[
		S_{11}^f = p_0 + p_{11}z_1^2 + p_{12}z_1z_2 + p_{22}z_2^2 + \bar{o}(z^2).
		\]
		By inserting the expansions for $S^f$ and $S_{11}^f$ into the equation
		\[
		S^f\frac{\partial^2 S^f}{\partial z_1^2} - \left(\frac{\partial S^f}{\partial z_1}\right)^2 = -2S^fS^f_{11}  + f_6S^fS_{12}^f
		\]
		and equating the part of order $\le 4$ we get the identity
		\begin{multline*}
		(2 - 2p_0)z_1^2 + (2a - 2p_0a - 2p_{11})z_1^4 - 2p_{12}z_1^3z_2 - 2p_{22}z_1^2z_2^2 + \\+ \frac{f_5 p_0 - f_5}{3}z_1 z_2^3 + \frac{f_6p_0 - f_6}{2}z_2^4 = 0.
		\end{multline*}
		Considering coefficient of $z_1^2$ we immediately get $p_0 = 1$ and by substituting it back into equation we get $p_{11} = p_{12} = p_{22} = 0$.

	\end{proof}

    \printbibliography
\end{document}